\numberwithin{equation}{section}
\newcounter{hours}\newcounter{minutes}
\newlength{\Indent}
\newlength{\Parskip}
\theoremstyle{plain}
\newtheorem{thm}{Theorem}[section]     
\newtheorem{prop}[thm]{Proposition}
\newtheorem{coroll}[thm]{Corollary}
\newcommand{\probleminput}[1]{\gdef\@probleminput{#1}}
  \par\addvspace{.5\baselineskip}
  \par\addvspace{.5\baselineskip}
\theoremstyle{remark}
\newtheorem{Remark}[thm]{Remark}
\newenvironment{remark}{\begin{Remark}}{\qed\end{Remark}}
\theoremstyle{definition}
\newtheorem{Defin}[thm]{Definition}
\newenvironment{defin}{\begin{Defin}}{\qed\end{Defin}}
\DeclareMathOperator{\Div}{div}
\newcommand{\RN}{\R^{N}}
\newcommand{\R}{\mathbb{R}}
\newcommand{\N}{\mathbb{N}}
\newcommand{\abs}[1]{\lvert#1\rvert}
\newcommand{\di}{\,\text{\rmfamily\upshape d}}
\newcommand{\pder}[2]{\frac{\partial #1} {\partial #2}}
\newcommand{\Om}{\varOmega}
\newcommand{\eps}{\varepsilon}
\newcommand{\CC}{\mathcal{C}}
\def\X{{\mathcal X}}
\newcommand{\lfint}{\lambda_{\textup{int}}}
\newcommand{\lfout}{\lambda_{\textup{out}}}
\newcommand{\lfboth}{\lambda}
\newcommand{\lfbothe}{\lambda}
\newcommand{\Omint}{\Om_{\textup{int}}}
\newcommand{\Omout}{\Om_{\textup{out}}}
\newcommand{\Memb}{\varGamma}
\newcommand{\Permemb}{\varGamma}
\newcommand{\XX}{\X}
\newcommand{\Hzero}{\mathcal{H}_0(\Memb)}
\newcommand{\Helle}{\mathcal{H}_\ell(\Memb)}
\newcommand{\delle}{d_{\mathcal{H}_\ell}}
\newcommand{\beltrami}{\Delta^{\!\!B}}
\newcommand{\beltramigrad}{\nabla^{B}}
\newcommand{\beltramidiv}{\Div^{B}}
\newcommand{\spaziosoleps}{L^2\big(0,T;\XX_0(\Om)\big)}
\newcommand{\wto}{\rightharpoonup}
\newcommand{\const}{\gamma}
\newcommand{\nOmout}{\Om^{\text{bulk}}}
\newcommand{\nOmoutuno}{\Om^{\text{int}}}
\newcommand{\nOmoutdue}{\Om^{\text{out}}}
\newcommand{\nOmint}{\Om^{\text{m}}}
\newcommand{\nMemb}{\partial\nOmint}
\newcommand{\soluz}{u}
\newcommand{\soluzdelta}{u^{\delta}}
\newcommand{\soluzdeltat}{u^{\delta}_{t}}
\newcommand{\soluzt}{u_{t}}
\newcommand{\ndfbothe}{A}
\newcommand{\nlfbothe}{B}
\newcommand{\nlfbothed}{B^\delta}
\newcommand{\testeta}{\phi^{\eta}}
\begin{document}

\title
{Well-posedness of two pseudo-parabolic problems for electrical conduction in heterogeneous media}
\author{M. Amar$^\dag$ -- D. Andreucci$^\dag$ -- R. Gianni$^\ddag$ -- C. Timofte$^\S$\\
\hfill \\
$^\dag$Dipartimento di Scienze di Base e Applicate per l'Ingegneria\\
Sapienza - Universit\`a di Roma\\
Via A. Scarpa 16, 00161 Roma, Italy
\\ \\
$^\ddag$Dipartimento di Matematica ed Informatica\\
Universit\`{a} di Firenze\\
Via Santa Marta 3, 50139 Firenze, Italy
\\ \\
$^\S$University of Bucharest\\
Faculty of Physics\\
P.O. Box MG-11, Bucharest, Romania
}

\begin{abstract}
We prove a well-posedness result for two pseudo-parabolic problems,
which can be seen as two models for the same electrical conduction phenomenon in
heterogeneous media, neglecting the magnetic field.
One of the problems is the concentration limit of the other one, when the thickness of
the dielectric inclusions goes to zero.
The concentrated problem involves
a transmission condition through interfaces, which is mediated by a suitable Laplace-Beltrami type equation.
\medskip

  \textsc{Keywords:} Existence and uniqueness, Laplace-Beltrami operator, interfaces, pseudo-parabolic
  equations.

  \textsc{AMS-MSC:} 35A01, 35K70, 58J05, 35M33
  \bigskip

  \textbf{Acknowledgments}: The first author is member of the \emph{Gruppo Nazionale per l'Analisi Matematica, la Probabilit\`{a} e le loro Applicazioni} (GNAMPA) of the \emph{Isti\-tuto Nazionale di Alta Matematica} (INdAM).
The second author is member of the \emph{Gruppo Nazionale per la Fisica Matematica} (GNFM) of the \emph{Istituto Nazionale di Alta Matematica} (INdAM).
The last author wishes to thank \emph{Dipartimento di Scienze di Base e Applicate per l'Ingegneria} for the warm hospitality and \emph{Universit\`{a} ``La Sapienza" of Rome} for the financial support.

\end{abstract}

\maketitle

\ifx\Versione\UnDeFiNeD\else
\begin{center}
\Versione
\end{center}
\fi


\section{Introduction}\label{s:introduction}
Composite materials are experiencing an increasing popularity in
different fields of science and applications. In particular, they have
a relevant role in the study of electrical conduction. A typical
geometry is the one of an electric conductor in which inclusions of a
possibly different conductive material are inserted, separated from
the surrounding by a dielectric layer, also called membrane. The
latter is in principle thick, i.e. $N$-dimensional, but for reasons of
simplicity it is often replaced with a thin (i.e. $(N-1)$-dimensional)
interface, also in view of the homogenization limit.

It is therefore necessary to investigate the behavior of the composite
medium when the thickness $\eta$ of the insulator vanishes, thus reducing
the membrane to a surface. Obviously, a suitable scaling of the
relevant physical quantities is required in this process. Some of the
authors in \cite{Amar:Andreucci:Bisegna:Gianni:2006a} performed the limit $\eta\to0$, assuming
essentially that the relative dielectric constant $\eps_r$ scales in
such a way that it tends to zero as the thickness of the membrane
$\eta$. In this case, it has been proved in \cite{Amar:Andreucci:Bisegna:Gianni:2006a} that in the
limiting problem the current is continuous across the interfaces and
the time derivative of the jump of the potential across the interfaces
is proportional to the current. This setting is suitable for
electrical conduction in biological tissues of alternating currents at
the radiofrequency range, assuming the magnetic field to be
negligible (see \cite{Amar:Andreucci:Bisegna:Gianni:2003a,Amar:Andreucci:Bisegna:Gianni:2004a,Amar:Andreucci:Bisegna:Gianni:2010,
Amar:Andreucci:Bisegna:Gianni:2013} and the references therein).

It is clear that other scalings are possible. We investigate here a
problem arising from the somewhat more traditional scaling of
concentration of capacity, where in practice $\eps_{r}$ scales as
$1/\eta$ in the limit. This fact produces, in the concentration
limit, the appearance of a transmission condition across the resulting
$(N-1)$-dimensional membrane ruled by the Laplace-Beltrami operator.
Similar problems in the context of thermal diffusion are studied, for instance,
in \cite{Savare:Visintin:1997}, where the authors consider a concentrated
capacity model in the framework of abstract evolution equations, and
in \cite{Amar:Gianni:2016C,Amar:Gianni:2016B,Amar:Gianni:2016A}, where
the concentration and the homogenization of a parabolic system involving
the tangential derivatives on the interface are investigated.
For a general survey on tangential operators we refer to \cite{Hebey:1996,Rosenberg:1997} and the references
therein, as well as for the well-posedness
in concentrated capacity problems arising in the framework of the heat conduction
we may recall, for instance,  \cite{Andreucci:1990,Magenes:1998} and the references therein.
Nevertheless, to the best of our knowledge, this problem has not
yet been considered in the framework of electric conduction described
above. It seems, however, that it may be physically relevant when
dealing with new dielectric materials made available by recent
technology (see \cite{AhAiAh2016}),
where the dielectric constant is very
high. The details and motivations of the concentration limit are
worked out in a forthcoming paper \cite{Amar:Andreucci:Gianni:Timofte:2017A}.
However, for the reader's convenience, we provide here a brief formal derivation of the
thin membrane problem starting from the thick one, in order to enlighten
the connection between the two pseudo-parabolic problems addressed in this paper,
on which we focus our main mathematical interest.

More precisely, the thin membrane problem consists of two elliptic
equations set in the exterior and interior conductive phases. Here, the
dependence on time is merely parametric. The two equations are
coupled at the interfaces, where the potential is continuous. In
addition, on these surfaces the time derivative of the potential solves
an equation for the Laplace-Beltrami operator. The jump of the current
across the interface acts as a source in this pseudo-parabolic
problem. Let us remark that such an equation is due to the
dielectric nature of the membranes. This mathematical problem appears
to be rather non-standard and new in the literature. Indeed, the elliptic and pseudo-parabolic
character of the differential equations imposes as a compatibility
condition the fact that the global flux of the potential at each one
of the interfaces must be essentially assigned.
Problems with a total flux boundary condition
can be encountered also in other contexts (see, e.g.,
\cite{Cioranescu:Damlamian:Li:2013,Li:1989,Li:Zheng:Tan:Shen:1998} and the
references quoted therein). The novelty
in our case is its coupling with an evolutive problem and the fact
that the solution on the interfaces is not required to be
constant. Moreover, the dependence on time and therefore the presence of initial
data raise some nontrivial issues.

The necessary presence at the same time of the total flux condition, the evolutive pseudo-parabolic character,
and the Laplace-Beltrami operator creates technical problems, starting even from the choice of suitable functional spaces,
for which we have been unable to find any specific reference in the literature.

Even the mathematical model for thick membranes shares the interesting
features of the concentrated scheme. In addition, it exhibits the
pseudo-parabolic equation in the bulk of the domain, that is in the
dielectric phase, which makes the degeneracy possibly stronger.
However, also for this problem, to the best of our knowledge, there are no results
of well-posedness in the existing literature.

We recall that similar problems involving pseudo-parabolic equations can appear in several different
contexts (see \cite{Cances:Choquet:Fan:Pop:2010,Cao:Pop:2015,Fan:Pop:2011,Cuesta:2000,Mikelic:2010,Ptashnyk:2007,Ptashnyk_bis:2007}).
However, in these papers the pseudo-parabolic part is always coercive and there are no interfaces; moreover, a pure parabolic term appears.
This kind of problems can model, for instance, fluid flow in porous media or
heat conduction in two-temperature systems and
are also used to regularize ill-posed transport problems
(see, also, \cite{Barenblatt:1990,Barenblatt:1993,Dull:2006,Vromans:2017} and the references therein).

We present here a proof of the well-posedness of both problems introduced above,
relying on two different approaches. In both cases, we see that the
compatibility condition mentioned above and, independently, the
parametric dependence on time in the conductive phases prevent the
solution to attain the prescribed initial data other than in the sense of
electrical currents, which is actually the correct one from a physical
viewpoint (see Remarks \ref{r:r6bis} and \ref{r:r8}). The problem with
a $N$-dimensional dielectric phase is treated by approximation,
introducing a suitable sequence of coercive problems. Our approach to
the problem with $(N-1)$-dimensional interfaces is based on a fixed point
argument, and must be carefully tuned in such a way that our
contractive operator preserves the relevant compatibility
conditions. This is made possible by Proposition~\ref{t:a3} which,
though related to results in \cite{Li:Zheng:Tan:Shen:1998}, seems to be new in its present
formulation, and allows some regularity estimates (see Remark \ref{r:r4}).

The paper is organized as follows: in Section~\ref{s:threeD_problem}, we present our main results and the geometrical setting for both problems,
together with a formal motivation of the concentration procedure.
In Section~\ref{s:exist_thick}, we give proofs for the problem with thick membranes and, in Section~\ref{s:exist_micro}, for the one with thin interfaces.

\section{Problems and Main Results}\label{s:threeD_problem}

\subsection{Tangential derivatives}
\label{s:LB}
Let $\phi$ be a ${\mathcal C}^2$-function,  $\mathbf{\Phi}$ be a
${\mathcal C}^2$-vector function and $S$ a smooth surface in $\R^N$
with normal unit vector $n$.
We recall that the tangential gradient of $\phi$ is given by
\begin{equation}\label{eq:a5biss}
\beltramigrad\phi=\nabla\phi-(n\cdot\nabla\phi)n
\end{equation}
and the tangential divergence of $\Phi$ is given by
\begin{multline}\label{eq:a3bis}
\beltramidiv\mathbf{\Phi}
=\Div\mathbf{\Phi}- (n\cdot\nabla\mathbf{\Phi}_i)n_i-(\Div n)(n\cdot\mathbf{\Phi})
\\
=\beltramidiv \left(\mathbf{\Phi}- (n\cdot\mathbf{\Phi})n\right)
=\Div\left(\mathbf{\Phi}- (n\cdot\mathbf{\Phi})n\right)\,,
\end{multline}
where, taking into account the smoothness of $S$,
the normal vector $n$ can be naturally defined in a small neighborhood of $S$ as
$\frac{\nabla d}{|\nabla d|}$, where $d$ is the signed distance from $S$.
Moreover, we define the Laplace-Beltrami operator as
\begin{equation}
\label{eq:beltrami}
\beltrami\phi =\Div^B(\beltramigrad\phi)\,.
\end{equation}
Finally, we recall that on a regular surface $S$ with no boundary (i.e. when $\partial S=\emptyset$)
we have
\begin{equation}\label{eq:a66}
\int_S \beltramidiv \mathbf{\Phi}\di\sigma =0 \,,
\end{equation}
for any ${\mathcal C}^2$-vector function $\mathbf{\Phi}$.

Here and in the following, the operators $\Div$ and $\nabla$, as well as $\Div^B$ and $\beltramigrad$, act
only with respect to the space variable $x$.
\medskip

\subsection{The problem with thick membranes}
\label{s:thick}

We consider first the problem where the insulating membranes have  a positive thickness, as displayed in Fig. \ref{fig:thick}.
\begin{figure}[htbp]
  \begin{center}
    \begin{pspicture}(12,4)
  \rput(6,2){
    \psccurve[linewidth=1pt](-2,-1)(-1,0)(0,-1)(1,-1)(2,0)(3,1)(4,1.5)(0,2)(-3,2)
    \rput[r](-3.5,0){\mbox{$\Om$}}
      \psellipse[linewidth=1pt,fillstyle=solid,fillcolor=gray](-.5,1.3)(0.45,0.6)
      \psellipse[linewidth=1pt,fillstyle=solid,fillcolor=gray](2,1)(0.3,0.5)
      \psellipse[linewidth=1pt,fillstyle=solid,fillcolor=gray](-1.5,.8)(0.4,0.4)
      \psellipse[linewidth=1pt,fillstyle=solid,fillcolor=gray](-2.5,.5)(0.4,0.5)
      \psellipse[linewidth=1pt,fillstyle=solid,fillcolor=gray](1,0)(.5,.4)%
      \psellipse[linewidth=1pt,fillstyle=solid,fillcolor=white](-.5,1.3)(0.3,0.1)
      \psellipse[linewidth=1pt,fillstyle=solid,fillcolor=white](2,1)(0.1,0.1)
      \psellipse[linewidth=1pt,fillstyle=solid,fillcolor=white](-1.5,.8)(0.2,0.3)
      \psellipse[linewidth=1pt,fillstyle=solid,fillcolor=white](-2.5,.5)(0.1,0.15)
      \psellipse[linewidth=1pt,fillstyle=solid,fillcolor=white](1,0)(.3,.2)
      \rput[r](1.6,1){\mbox{$\nOmint_{i}$}}%
  }
\end{pspicture}
 
    \caption{The geometrical setting in the case of thick membranes: here $\nOmint$ is the gray region, while $\nOmout$ is the white region.
    The connected components of $\nOmint$ are labelled as $\nOmint_{i}$.}
    \label{fig:thick}
  \end{center}
\end{figure}
To this purpose, let $\Om$ be a smooth open connected bounded subset of $\RN$
and let us write $\Om$
as $\Om=\nOmout\cup\nOmint\cup\nMemb$, where $\nOmout$ and $\nOmint$
are two disjoint open subsets of $\Om$. More precisely, $\nOmint$ is the union of the isolating membranes
with boundary $\nMemb$, while we assume that $\nOmout={\nOmoutuno}\cup{\nOmoutdue}$, where
$\nOmoutdue$, $\nOmoutuno$ correspond to the two conductive regions separated by
the membrane $\nOmint$ and we denote $\nMemb =
(\partial{\nOmoutuno}\cup\partial{\nOmoutdue})\cap \Om$.
We assume that $\nOmoutdue$ is connected and that the boundary $\partial \nOmint$ is smooth.
Finally, given $T>0$, we denote by $\Om_{T}=\Om\times(0,T)$.
More in general, for any spatial domain $G$, we denote $G_{T}=G\times(0,T)$.

Let $\lfint$, $\lfout$, $\alpha$ be strictly positive constants and set
$\ndfbothe (x) = \lfint$ in $\nOmoutuno$, $\ndfbothe (x) = \lfout$ in
$\nOmoutdue$, $\ndfbothe (x) = 0$ in $\nOmint$, $\nlfbothe (x) = 0$ in
$\nOmoutuno\cup\nOmoutdue$, $\nlfbothe (x) = {\alpha}$ in $\nOmint$.

For a given function $\overline u_0\in H^{1}(\nOmint)$, we
will denote by $\widetilde u_0$ an extension to the whole of $\Om$ of
$\overline u_0$, such that $\widetilde u_0\in H^1_0(\Om)$ and
$\Vert \widetilde u_0\Vert^{}_{H^1_0(\Om)}\leq \gamma \Vert \overline
u_0\Vert^{}_{H^{1}(\nOmint)}$.
Notice that we will use the $H^1_0(\Om)$ regularity of the initial data
in the proof of Theorem \ref{t:a1}, as required by the relaxation technique applied there.
Finally, let $f\in L^2(\Om_T)$.

We consider the problem for $\soluz$
given by
\begin{alignat}2
  \label{eq1}
  -\Div(\ndfbothe \nabla\soluz+\nlfbothe \nabla \soluzt)&=f\,,&\qquad &\text{in $\Om_T$;}
  \\
  \label{eq5}
 \nabla\soluz(x,0)&=\nabla\overline u_0(x)\,,&\qquad&\text{in $\nOmint$.}
\end{alignat}

\begin{defin}
  \label{d:thick_sol}
  We say that $\soluz(x,t)\in L^2\big(0,T;H^1_0(\Om)\big)$ is a weak solution to problem \eqref{eq1}--\eqref{eq5} if
  \begin{equation}\label{eq:a14}
    \int_{0}^{T}\!\!\int_{\Om} \ndfbothe \nabla\soluz\cdot\nabla{\phi}\di x\di t-
    \int_{0}^{T}\!\!\int_{\Om} \nlfbothe \nabla \soluz\cdot\nabla\phi_t \di x\di t=
    \int_{0}^{T}\!\!\int_{\Om}f \phi \di x\di t+
    \int_{\Om} \nlfbothe \nabla\overline u_0\cdot\nabla{\phi}(0)\di x\,,
  \end{equation}
  for every test function $\phi\in \CC^\infty(\overline{\Om_T})$ such that $\phi$ has compact support in $\Om$ for every
  $t\in(0,T)$ and $\phi(\cdot,T)=0$ in $\Om$.
\end{defin}

First, we note that, by formally testing \eqref{eq1} and \eqref{eq5} with $u$, integrating in time and using Poincar\'{e}'s inequality and
Gronwall's lemma, one can derive the energy inequality
\begin{equation}\label{energy1}
\int_{0}^{T}\int_{\nOmoutdue\cup\nOmoutuno} \abs{\nabla \soluz}^{2}
  \di x\di\tau
  +  \sup_{t\in(0,T)}\int_{\nOmint} \abs{\nabla \soluz}^{2}(t) \di x
\le \gamma(\Vert f\Vert^{2}_{L^2(\Om)}+
\Vert{\nabla \overline u_0}\Vert^{2}_{L^2(\nOmint)})\,,
\end{equation}
where $\gamma=\gamma(\lfint,\lfout,\alpha,\nOmint)$.  In the next
proposition, we make this remark rigorous; this point is not obvious
due to the characteristic feature of the problems at hands, that is
the fact that they do not necessarily preserve the value of the
initial data (which actually may be locally changed by a constant).

\begin{prop}\label{r:r6}
Let $u\in L^2\big(0,T;H^1_0(\Om)\big)$ be a solution of problem \eqref{eq1}--\eqref{eq5}.
Then, $u$ satisfies the energy estimate \eqref{energy1}.
\end{prop}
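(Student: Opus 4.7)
My plan is to rigorously justify the formal computation obtained by testing \eqref{eq:a14} with $\phi = u$ (cut off in time), compensating for the insufficient time regularity of $u$ by a time mollification and, for the partial nature of the initial datum \eqref{eq5}, by extracting from \eqref{eq:a14} itself the missing pointwise information about $\nabla u|_{\nOmint}$ at $t=0$.

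By density of $\CC^\infty_{c}(\Om)$ in $H^{1}_{0}(\Om)$, I first extend the validity of \eqref{eq:a14} to test functions $\phi \in H^{1}(0,T;H^{1}_{0}(\Om))$ with $\phi(\cdot,T)=0$. Choosing $\phi(x,t)=\theta(t)\psi(x)$ with $\psi\in H^{1}_{0}(\Om)$ and $\theta\in \CC^{1}([0,T])$, $\theta(T)=0$, I read from \eqref{eq:a14} that the map $t\mapsto g_{\psi}(t)\defeq\int_{\Om}\nlfbothe\nabla u(t)\cdot\nabla\psi\di x$ coincides a.e.\ with an absolutely continuous function on $[0,T]$ whose value at $t=0$ equals $\int_{\Om}\nlfbothe\nabla\overline u_{0}\cdot\nabla\psi\di x$. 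Since $\nlfbothe=\alpha\chi_{\nOmint}$, this identifies a weak $L^{2}(\nOmint)$ trace of $\nabla u$ at $t=0$ equal to $\nabla\overline u_{0}$, and makes $t\mapsto\nabla u(t)|_{\nOmint}$ weakly continuous into $L^{2}(\nOmint)$.

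Next, I regularize in time. Extending $u$ for $t\le 0$ by the $H^{1}_{0}(\Om)$ extension $\widetilde u_{0}$ (its values outside $\nOmint$ will eventually drop out thanks to the previous step) and convolving with an even mollifier $\rho_{\eps}$ yields $u^{\eps}\in\CC^{\infty}([0,T];H^{1}_{0}(\Om))$ with $u^{\eps}\to u$ in $L^{2}(0,T;H^{1}_{0}(\Om))$. Let $\theta_{\tau,h}$ be the Lipschitz cutoff equal to $1$ on $[0,\tau-h]$, to $0$ on $[\tau,T]$, and linear on $[\tau-h,\tau]$. Substituting $\phi=\theta_{\tau,h}u^{\eps}$ into the extended \eqref{eq:a14} and using that $u^{\eps}$ is smooth in time, so that $\nabla u^{\eps}\cdot\nabla u^{\eps}_{t}=\tfrac{1}{2}\partial_{t}|\nabla u^{\eps}|^{2}$, together with the symmetry identity $\int_{0}^{T}F(t)(G^{\eps})'(t)\di t=-\int_{0}^{T}(F^{\eps})'(t)G(t)\di t$ modulo boundary contributions valid for even mollifiers, I pass to the limit $\eps\to0$: the boundary term at $t=0$ is identified via the weak trace from the previous paragraph, yielding after the subsequent limit $h\to 0$ at a Lebesgue point $\tau$ the energy identity
\begin{equation*}
\int_{0}^{\tau}\!\!\int_{\Om}\ndfbothe|\nabla u|^{2}\di x\di t+\frac{\alpha}{2}\int_{\nOmint}|\nabla u(\tau)|^{2}\di x=\int_{0}^{\tau}\!\!\int_{\Om}f\,u\di x\di t+\frac{\alpha}{2}\int_{\nOmint}|\nabla\overline u_{0}|^{2}\di x,
\end{equation*}
valid for a.e.\ $\tau\in(0,T)$.

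Setting $E(\tau)$ equal to the left-hand side, I apply Young's inequality to the source term, Poincar\'e's inequality on $H^{1}_{0}(\Om)$ to control $\int_{0}^{\tau}\!\int_{\Om}u^{2}$ by $\int_{0}^{\tau}\!\int_{\Om}|\nabla u|^{2}$ (the $\nOmoutdue\cup\nOmoutuno$ contribution is absorbed in $E(\tau)$ by taking the Young parameter small, while the $\nOmint$ contribution produces $\int_{0}^{\tau}E(s)\di s$), and Gronwall's lemma to deduce \eqref{energy1}. The main obstacle is the $\eps\to 0$ passage in the pseudo-parabolic term $\int_{0}^{T}\!\int_{\Om}\nlfbothe\nabla u\cdot\nabla u^{\eps}_{t}\di x\di t$: since $u_{t}$ is not known to lie in any $L^{2}$-based space, the identification of its limit rests on the even-mollifier symmetry combined with the weak continuity in time of $\nabla u|_{\nOmint}$ established above; the rest of the argument is a standard Gronwall-type closure.
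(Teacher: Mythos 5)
Your proof is correct in its essentials, but it follows a genuinely different route from the paper's. The paper first invokes an [LSU]-style regularity argument to obtain $\nabla u\in\CC^0\big(0,T;L^2(\nOmint)\big)$, derives \eqref{energy11} with $\nabla u(0)$ in place of $\nabla\overline u_0$, and then upgrades to \eqref{energy1} by showing that $\nabla u(0)-\nabla\overline u_0$, being orthogonal to all $\nabla\varphi$, is the gradient of a function solving a homogeneous Neumann problem in each $\nOmint_i$, hence locally constant. You instead derive the energy identity in one pass via time-mollification with an even kernel, the backward extension by $\widetilde u_0$, and the cutoff $\theta_{\tau,h}$. The lynchpin of your argument is the absolute continuity of $g_\psi(t)=\int_\Om\nlfbothe\nabla u(t)\cdot\nabla\psi\di x$ with the prescribed value $g_\psi(0)=\int_\Om\nlfbothe\nabla\overline u_0\cdot\nabla\psi\di x$; applied with $\psi=\widetilde u_0$, this makes the boundary contributions produced by the even-mollifier symmetrization near $t=0$ converge to $\tfrac{\alpha}{2}\int_{\nOmint}|\nabla\overline u_0|^2\di x$. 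One small overstatement to flag: the claim that this "identifies a weak $L^2(\nOmint)$ trace of $\nabla u$ at $t=0$ equal to $\nabla\overline u_0$" is not literally correct, since the absolute continuity of the scalar functions $g_\psi$ only pins the trace down when paired against gradients of $H^1_0(\Om)$ functions — to obtain actual $L^2(\nOmint)$ equality one needs the paper's Neumann-problem argument (or, equivalently, to test against $\psi=u(0)$ and $\psi=\widetilde u_0$ and subtract). Fortunately, your mollification scheme never needs that stronger equality, only the pairing $g_{\widetilde u_0}(t)\to g_{\widetilde u_0}(0)$, so this is a matter of phrasing rather than a gap. What each approach buys: yours is self-contained and avoids the [LSU] reference, at the cost of the somewhat delicate bookkeeping of boundary corrections in the symmetrization; the paper's, by separating continuity from trace identification, makes the role of the Neumann compatibility transparent and directly produces the strong time-continuity of $\nabla u|_{\nOmint}$ that it reuses elsewhere.
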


\begin{proof}
Reasoning as in \cite[p.158 ff.]{LSU}, we can prove that $\nabla u\in \CC^0\big(0,T;L^2(\nOmint)\big)$. Hence, by Gronwall's lemma
and Poincar\'{e}'s inequality, we obtain that
\begin{equation}\label{energy11}
\int_{0}^{T}\int_{\nOmoutdue\cup\nOmoutuno} \abs{\nabla \soluz}^{2}
  \di x\di\tau
  +  \sup_{t\in(0,T)}\int_{\nOmint} \abs{\nabla \soluz}^{2}(t) \di x
\le \gamma(\Vert f\Vert^{2}_{L^2(\Om)}+
\Vert{\nabla u(0)}\Vert^{2}_{L^2(\nOmint)})\,.
\end{equation}
Moreover, from the weak formulation
\eqref{eq:a14}, it follows that
\begin{equation}\label{eq:a1}
\int_{\Om} B(\nabla u(0)-\nabla \overline u_0)\cdot\nabla \varphi\di x=0\,,
\end{equation}
for all $\varphi\in\CC^1(\overline\Om)$. By the asserted continuity of $\nabla u(t)$, we infer that, up to modifying $u(t)$ by a constant
$k_i(t)$ in each connected component $\nOmint_i$ of $\nOmint$, we may assume that, as $t\to 0$, $u(t)\to v_i$ in $L^2(\nOmint_i)$. For example, 
we may enforce the condition that $u(t)$ has zero average on each component $\nOmint_{i}$.
Therefore, recalling that $B=0$ in $\nOmout$ and setting $g=v_i-\overline u_0$ in $\nOmint_i$, from \eqref{eq:a1} it follows
that in each connected component $\nOmint_{i}$ we have
\begin{equation*}
\int_{\nOmint_{i}} B\nabla g\cdot\nabla \varphi\di x=0\,,
\end{equation*}
for all $\varphi\in\CC^1(\overline{\nOmint_{i}})$, which is exactly the weak formulation of the Neumann problem
\begin{equation*}
\begin{aligned}
-\Div (B\nabla g)& =0\,, &\quad & \text{in $\nOmint_i$;}
\\
\pder{g}{\nu}&=0\,, &\quad & \text{on $\nMemb_i$.}
\end{aligned}
\end{equation*}
This implies that $v_i=\overline u_0+c_i$, for $c_i\in\R$. Hence, $\nabla u(x,0)=\nabla \overline u_0(x)$
a.e. in $\nOmint$ and the thesis is proven.
\end{proof}

Clearly, estimate \eqref{energy1} and the linearity of problem
\eqref{eq1}--\eqref{eq5} ensure that, if a solution does exist in
$L^2\big(0,T;H^1_0(\Om)\big)$, it is unique and depends continuously on $\nabla\overline u_0$.  Hence, we have just to
prove existence. This will be done in Section~\ref{s:exist_thick}, proving the
following general result.

\begin{thm}\label{t:a2}
Let $\ndfbothe,\nlfbothe, f $ and $\overline u_0$
be as above. Then, for any given $T>0$, problem \eqref{eq1}--\eqref{eq5}
admits a unique solution $\soluz\in L^2\big(0,T;H^1_0(\Om)\big)$.
\end{thm}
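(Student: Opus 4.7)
The plan is to prove existence by a vanishing \emph{pseudo-viscosity} regularization: I perturb the degenerate coefficient $\nlfbothe$ to a uniformly positive one, solve the resulting uniformly pseudo-parabolic problem by classical Hilbert-space techniques, and then pass to the limit. Uniqueness and continuous dependence on the data have already been established via Proposition~\ref{r:r6} and the linearity of the equation, so only existence is at stake. Concretely, for $\delta\in(0,1]$ I consider $\nlfbothed\defeq \nlfbothe+\delta\ge\delta>0$ and the regularized problem
\[
-\Div\bigl(\ndfbothe\nabla \soluzdelta+\nlfbothed\nabla \soluzdeltat\bigr)=f \quad\text{in }\Om_T,\qquad \soluzdelta(\cdot,0)=\widetilde u_0,\qquad \soluzdelta=0\text{ on }\partial\Om\times(0,T).
\]
This restores uniform pseudo-parabolicity without altering the structure of $\ndfbothe$, so that the perturbative contributions will be of order $O(\delta)$ on quantities already controlled by the energy estimate.

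For every fixed $\delta>0$ the operator $L^\delta\colon H^1_0(\Om)\to H^{-1}(\Om)$, $\langle L^\delta v,w\rangle \defeq\int_\Om \nlfbothed\nabla v\cdot\nabla w\di x$, is a Hilbert-space isomorphism by Lax--Milgram, so the regularized equation is equivalent to the abstract ODE $\soluzdeltat=(L^\delta)^{-1}\bigl[f+\Div(\ndfbothe\nabla \soluzdelta)\bigr]$, whose right-hand side is affine and Lipschitz continuous on $H^1_0(\Om)$. Standard ODE theory then yields a unique $\soluzdelta\in C^1\bigl([0,T];H^1_0(\Om)\bigr)$. Testing the regularized equation with $\soluzdelta$ and integrating over $(0,t)$ yields the identity
\[
\tfrac{1}{2}\int_\Om \nlfbothed\abs{\nabla \soluzdelta(t)}^2\di x+\int_0^t\!\!\int_\Om \ndfbothe\abs{\nabla \soluzdelta}^2\di x\di\tau
=\tfrac{1}{2}\int_\Om \nlfbothed\abs{\nabla \widetilde u_0}^2\di x+\int_0^t\!\!\int_\Om f\soluzdelta\di x\di\tau.
\]
Using $\nlfbothed\ge\alpha$ on $\nOmint$, $\ndfbothe\ge\min(\lfint,\lfout)$ on $\nOmout$, Poincaré's inequality in $H^1_0(\Om)$, Young's inequality with a small parameter, and Gronwall's lemma, I obtain bounds uniform in $\delta\in(0,1]$ of the form $\sup_{t\in(0,T)}\norma{\nabla \soluzdelta(t)}{L^2(\nOmint)}^2+\int_0^T\norma{\soluzdelta}{H^1_0(\Om)}^2\di t\le C$.

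Up to a subsequence $\delta_n\to 0^+$, weak and weak-$*$ compactness yield some $\soluz\in L^2\bigl(0,T;H^1_0(\Om)\bigr)$ with $\soluz^{\delta_n}\rightharpoonup\soluz$ in $L^2(0,T;H^1_0(\Om))$ and $\nabla\soluz^{\delta_n}\stackrel{*}{\rightharpoonup}\nabla\soluz$ in $L^\infty(0,T;L^2(\nOmint))$. I then pass to the limit in the weak formulation of the regularized problem,
\[
\intoTom \ndfbothe\nabla \soluzdelta\cdot\nabla\phi\di x\di t-\intoTom \nlfbothed\nabla \soluzdelta\cdot\nabla\phi_t\di x\di t=\intoTom f\phi\di x\di t+\int_\Om \nlfbothed\nabla \widetilde u_0\cdot\nabla\phi(0)\di x,
\]
splitting $\nlfbothed=\nlfbothe+\delta$: the $\delta$-pieces are $O(\delta)$ by the uniform $L^2$-bounds on $\nabla \soluzdelta$ and $\nabla \widetilde u_0$, and therefore vanish, while the $\nlfbothe$-pieces converge by the weak convergences listed above. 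Since $\nlfbothe=0$ outside $\nOmint$ and $\nabla \widetilde u_0=\nabla \overline u_0$ on $\nOmint$, the initial-data term converges to $\int_\Om \nlfbothe\nabla \overline u_0\cdot\nabla\phi(0)\di x$, so $\soluz$ satisfies \eqref{eq:a14}.

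The main obstacle is the combined double degeneracy of the problem: $\nlfbothe$ vanishes on $\nOmout$, so no evolutive control is available there, while $\ndfbothe$ vanishes on $\nOmint$, so no spatial coercivity is available there. The regularization $\nlfbothed=\nlfbothe+\delta$ is tailored precisely to leave both native coercivities untouched while introducing an auxiliary coercive term whose contribution vanishes in the limit exactly at the rate permitted by the a priori estimates. A further subtle point already emphasized in Proposition~\ref{r:r6} is that the initial datum can only be recovered at the level of gradients on $\nOmint$; fortunately, this is built directly into the integration-by-parts-in-time structure of Definition~\ref{d:thick_sol}, so no additional work is required once the regularized solutions are known to attain $\widetilde u_0$ pointwise at $t=0$.
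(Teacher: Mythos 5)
Your proposal is correct and follows essentially the same route as the paper: perturb $\nlfbothe$ to a uniformly positive $\nlfbothed$, solve the resulting uniformly pseudo-parabolic problem (the paper does this via a hand-built short-time contraction of an explicit operator $h\mapsto \widetilde u_0+\int_0^t v^{\delta}$, while you phrase the same construction as Picard--Lindel\"{o}f for the equivalent abstract ODE $u_t=(L^\delta)^{-1}[f+\Div(\ndfbothe\nabla u)]$), derive a $\delta$-uniform energy estimate, and pass to the weak limit in the variational formulation. One small inaccuracy: with $f\in L^2(\Om_T)$ only, the right-hand side of the abstract ODE is merely $L^2$ in time, so $\soluzdelta\in W^{1,2}\big(0,T;H^1_0(\Om)\big)$ (equivalently $L^2(0,T;H^1_0)\cap H^1(\Om_T)$, as in the paper), not $\CC^1\big([0,T];H^1_0(\Om)\big)$.
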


\begin{remark}\label{r:r6bis}
 Assume, for the moment, that the source $f$ is sufficiently regular to ensure that $\nabla u$ has a trace for $t=0$ as an $L^2$-function in the whole of $\Om$.

Notice that, from \eqref{eq1}--\eqref{eq:a14}, it follows that the solution $u$ satisfies, for each connected component
$\nOmint_i$ of $\nOmint$,
\begin{equation}\label{eq:a6n}
\int_{\partial\nOmoutdue\cap\nMemb_i}\lfout\pder{u}{\nu}\di \sigma =\int_{\nOmoutuno_i\cup\nOmint_i}f\di x\,,
\end{equation}
in a weak sense (see Remark \ref{r:r20}).
In general, this condition at time $0$
is not automatically satisfied for every choice of the initial datum $\overline u_0$.
However, reasoning as in Proposition \ref{t:a3} with $\Memb_i=\partial\nOmoutdue\cap\nMemb_i$, it is possible to
prove that $\overline u_0$ can be modified by a suitable constant $c_i$ in every $\nOmint_i$ in such a way that \eqref{eq:a6n} is fulfilled.
Clearly, this does not affect the initial condition for $\nabla u$, in accordance with the
fact that problem \eqref{eq1}--\eqref{eq:a14} requires an initial condition only for the gradient of the solution and not
for the solution itself.
Therefore, the solution does not assume exactly the initial condition $\overline u_0$; roughly speaking, it ``rearranges by itself"
the prescribed initial condition by adding to $\overline u_0$ in each connected component of $\nOmint$ the previously quoted
constant $c_i$, in order for \eqref{eq:a6n} to hold true.

However, we stress again that without suitable assumptions on the source $f$  with respect to the time dependence
(for instance $f\in H^1\big(0,T;L^2(\Om)\big)$), it is not possible to guarantee
that $u$ has a trace for $t=0$ in the whole of $\Om$,
since in $\nOmout\cup\nOmint$ the problem \eqref{eq1} displays only a parametric dependence on $t$.

\end{remark}

\subsection{Formal concentration}
\label{ss:concentration}
We devote this subsection to formally justify the relationship between the problem with thick membranes
and the one with thin membranes. The rigorous proof of this result can be found in \cite{Amar:Andreucci:Gianni:Timofte:2017A}.

 Assume that $\nOmint$ is, indeed, a tubular neighborhood of a smooth $(N-1)$-dimensional
 regular surface $\Memb$ with thickness $\eta<<1$ and
 with a finite number of connected components strictly contained in $\Om$.
Redefine $\nOmint=\Memb_\eta$,
$\nOmoutuno=\nOmoutuno_\eta$ and
$\nOmoutdue=\nOmoutdue_\eta$,
so that our domain becomes $\Om=\nOmoutuno_\eta\cup\nOmoutdue_\eta\cup\Memb_\eta$.

Let $\lfint$, $\lfout$, $\alpha$ be as in Subsection \ref{s:thick} and
define $\ndfbothe^\eta (x) = \lfint$ in $\nOmoutuno_\eta$, $\ndfbothe^\eta (x) = \lfout$ in
$\nOmoutdue_\eta$, $\ndfbothe^\eta (x) = 0$ in $\Memb_\eta$, $\nlfbothe^\eta (x) = 0$ in
$\nOmoutuno_\eta\cup\nOmoutdue_\eta$, $\nlfbothe^\eta (x) = {\alpha}/\eta$ in $\Memb_\eta$.
The choice of the scaling $1/\eta$ is designed to let the specific permittivity of the interface to blow up
as $\eta\to 0$. This is essential to allow conduction ``along" the concentrated membrane, as required
by the fact that in the thin interface model we have a Laplace-Beltrami equation on the membrane $\Memb$.

Denoting by $u^\eta$ the solution of problem \eqref{eq1}-\eqref{eq5},
we can rewrite its new weak formulation as
  \begin{multline}\label{eq:a1400}
    \int_{0}^{T}\!\!\int_{\nOmoutuno_\eta\cap\nOmoutdue_\eta} \ndfbothe^\eta \nabla\soluz^\eta\cdot\nabla{\phi}\di x\di t-
    \frac{\alpha}{\eta}\int_{0}^{T}\!\!\int_{\Memb_\eta}  \nabla \soluz^\eta\cdot\nabla\phi_t \di x\di t
    \\
    =
    \int_{0}^{T}\!\!\int_{\Om}f \phi \di x\di t+
    \frac{\alpha}{\eta}\int_{\Memb_\eta} \nabla\overline u_0\cdot\nabla{\phi}(0)\di x\,,
  \end{multline}
  for every test function $\phi\in \CC^\infty(\overline{\Om_T})$ such that $\phi$ has compact support in $\Om$ for every
  $t\in(0,T)$ and $\phi(\cdot,T)=0$ in $\Om$.

  In order to pass to the limit for $\eta\to 0$ in the previous equation, we consider smooth test functions
  $\testeta$ as before such that
$\nabla\testeta \sim \beltramigrad\testeta$ in $\Memb^\eta$
and $\nabla\testeta$ is stable in $\nOmoutuno_\eta\cup\nOmoutdue_\eta$.
Such testing functions can be constructed by a suitable process of interpolation (see \cite{Amar:Andreucci:Gianni:Timofte:2017A,Amar:Gianni:2016C}).
Inserting $\testeta$ in \eqref{eq:a1400}, it formally follows that
\begin{multline}\label{eq:a1401}
    \int_{0}^{T}\!\!\int_{\nOmoutuno_\eta\cap\nOmoutdue_\eta} \ndfbothe^\eta \nabla\soluz^\eta\cdot\nabla{\testeta}\di x\di t-
    \frac{\alpha}{\eta}\,\eta\int_{0}^{T}\!\!\int_{\Memb}  \nabla \soluz^\eta\cdot\beltramigrad\testeta_t \di \sigma\di t
    \\
    \sim
    \int_{0}^{T}\!\!\int_{\Om}f \testeta \di x\di t+
    \frac{\alpha}{\eta}\,\eta\int_{\Memb} \nabla\overline u_0\cdot\beltramigrad{\testeta}(0)\di \sigma\,.
  \end{multline}
Thus, taking into account that $\nabla \soluz^\eta\cdot\beltramigrad\testeta_t=\beltramigrad\soluz^\eta\cdot\beltramigrad\testeta_t$ and
$\nabla\overline u_0\cdot\beltramigrad{\testeta}(0)=\beltramigrad\overline u_0\cdot\beltramigrad{\testeta}(0)$,
and passing to the limit for $\eta\to 0$, we obtain
\begin{multline*}
  \int_{0}^{T}\!\!\int_{\Om} \lfbothe \nabla u\cdot\nabla\phi \di x\di \tau
  -{\alpha}\int_0^T\!\!\int_{\Memb} \beltramigrad u\cdot \beltramigrad \phi_t \di\sigma\di\tau
  \\
  =\int_{0}^{T}\!\!\int_{\Om}  f\phi \di x\di \tau+
  {\alpha}\int_{\Memb} \beltramigrad\overline u_0\cdot\beltramigrad\phi(x,0)\di\sigma \,,
\end{multline*}
which is exactly the weak formulation of the problem with thin membranes (see \eqref{eq:PDEin}--\eqref{eq:InitData}).

\subsection{The problem with thin membranes}
\label{s:thin_prb}

The typical geometrical setting is displayed in Figure~\ref{fig:thin}.
Here we give, for the sake of clarity, its detailed formal definition.

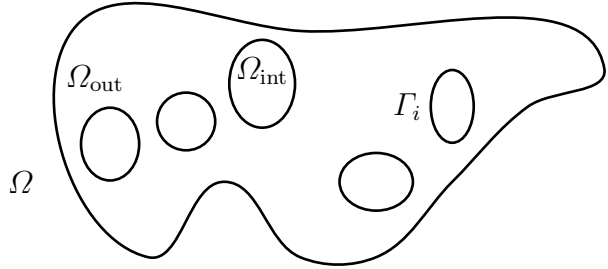
\begin{figure}[htbp]
  \begin{center}
    \begin{pspicture}(12,5)
  \rput(6,2){
    \psccurve[linewidth=1pt](-2,-1)(-1,0)(0,-1)(1,-1)(2,0)(3,1)(4,1.5)(0,2)(-3,2)
    \rput[r](-3.5,0){\mbox{$\Om$}}
      \psellipse[linewidth=1pt](-.5,1.3)(0.45,0.6)
      \psellipse[linewidth=1pt](2,1)(0.3,0.5)
      \psellipse[linewidth=1pt](-1.5,.8)(0.4,0.4)
      \psellipse[linewidth=1pt](-2.5,.5)(0.4,0.5)
      \psellipse[linewidth=1pt](1,0)(.5,.4)%
      \rput[r](1.6,1){\mbox{$\Memb_{i}$}}%
      \rput[b](-.5,1.3){\mbox{$\Omint$}}
      \rput[b](-2.7,1.2){\mbox{$\Omout$}}
  }
\end{pspicture}
 
    \caption{The geometrical setting in the case of thin membranes:  The connected components of $\Memb$ are labelled as $\Memb_{i}$.}
    \label{fig:thin}
  \end{center}
\end{figure}

Let $\Om$ be an open connected bounded subset of $\RN$ such that
$\Om=\Omint\cup\Omout\cup\Memb$, where $\Omint$ and $\Omout$ are
two disjoint open subsets of $\Om$, and
$\Memb=\partial\Omint\cap\Om=\partial\Omout\cap\Om$.
The region $\Omout$ [respectively, $\Omint$] corresponds to the outer
phase [respectively, the inclusions], while
$\Memb$ is the interface. We assume that $\Omout$ is connected (while $\Omint$ could be
connected or not), $\Memb$ is the union of a finite number (say $m\geq 1$)
of connected components $\Memb_i$, and ${\rm dist}(\Memb,\partial\Om)>0$.
We assume also that $\Omout,\Omint$ have regular boundary (that is $\partial\Om$ and $\Memb$
are smooth).
Finally, let $\nu$ denote the normal unit vector to $\Memb$ pointing into $\Omout$.
\medskip

Let us consider the problem
\begin{alignat}2
  \label{eq:PDEin}
  -\Div(\lfbothe \nabla u)&=f\,,&\qquad &\text{in $(\nOmoutuno\cup\nOmoutdue)\times(0,T)$;}
  \\
  \label{eq:FluxCont}
  [u] &=0 \,,&\qquad &\text{on
  $\Memb_T$;}
  \\
  \label{eq:Circuit}
  -\alpha\beltrami u_{ t}
  &=[\lfbothe \nabla u  \cdot \nu]\,,&\qquad
  &\text{on $\Memb_T$;}
  \\
  \label{eq:BoundData}
  u(x,t)&=0\,,&\qquad&\text{on $\partial\Om\times(0,T)$;}
  \\
  \label{eq:InitData}
 \beltramigrad u(x,0)&=\beltramigrad\overline u_0(x)\,,&\qquad&\text{on $\Memb$,}
\end{alignat}
where we denote
\begin{equation}
  \label{eq:jump}
  [u] = u^{\text{out}}  -  u^{\text{int}} \,,
\end{equation}
the same notation being employed also for other quantities.
Here, $\lfbothe=\lfint$ in $\Omint$, $\lfbothe=\lfout$ in $\Omout$ and $\lfint,\lfout,\alpha, f$
are as in the previous section, while $\overline u_0\in H^1(\Memb)$ .

Since problem \eqref{eq:PDEin}--\eqref{eq:InitData} is not standard, in order to define a proper notion of weak
solution, let us set
\begin{equation}
 \label{eq:space2}
 \XX_0(\Om) :=\{u\in H^1_0(\Om): tr\vert_\Memb(u) \in H^1(\Memb)\}\,.
\end{equation}
Notice that $\XX_0(\Om)$ is a Hilbert space endowed with the scalar product given by
the sum of the two natural scalar products in $H^1_0(\Om)$ and $H^1(\Memb)$, respectively
(see, for instance, \cite[end of Subsection 2.3]{Amar:Gianni:2016C}).

\begin{defin}\label{d:weak_sol}
We say that $u\in \spaziosoleps$ is a weak solution of problem \eqref{eq:PDEin}--\eqref{eq:InitData} if
\begin{multline}\label{eq:weak_sol}
  \int_{0}^{T}\!\!\int_{\Om} \lfbothe \nabla u\cdot\nabla\phi \di x\di \tau
  -{\alpha}\int_0^T\!\!\int_{\Memb} \beltramigrad u\cdot \beltramigrad \phi_t \di\sigma\di\tau
  \\
  =\int_{0}^{T}\!\!\int_{\Om}  f\phi \di x\di \tau+
  {\alpha}\int_{\Memb} \beltramigrad\overline u_0\cdot\beltramigrad\phi(x,0)\di\sigma \,,
\end{multline}
for every test function $\phi\in \CC^\infty(\overline{\Om_T})$ such that $\phi$ has compact support in $\Om$ for every
$t\in(0,T)$ and $\phi(\cdot,T)=0$ in $\Om$.
\end{defin}

We also state the following energy inequality, which can be obtained rigorously via a regularization
process in the spirit of Proposition~\ref{r:r6} and formally by testing with $u$ problem \eqref{eq:PDEin}--\eqref{eq:jump}
and integrating in time:
\begin{equation}\label{eq:a30}
  \int_{0}^{T}\!\!\int_{\Om} |\nabla u|^2 \di x\di \tau
  +\sup_{t\in(0,T)}\int_{\Memb} |\beltramigrad u|^2 \di\sigma\leq \gamma
  \left(\Vert f\Vert_{L^2(\Om)}+\Vert \overline u_0\Vert_{H^1(\Memb)}\right)\,,
\end{equation}
where $\gamma$ depends on $\lfint,\lfout,\alpha$ and the Poincar\'{e} constant for $\Om$.
Clearly, uniqueness of solutions follows trivially by the previous energy inequality, hence it remains to prove existence.
To this purpose, we first need the following technical result (for similar problems, see \cite{Cioranescu:Damlamian:Li:2013,Li:1989,
Li:Zheng:Tan:Shen:1998}).
\medskip

\begin{prop}\label{t:a3}
Let $\Memb=\cup_i \Permemb_i$, $i=1,\dots,m$, and assume that $ h_i\in H^1(\Memb_i)$.
Then, there exist $c_i\in\R$, $i=1,\dots,m$, such that the solution to problem
\begin{alignat}2
\label{eq:a10}
\Delta w &=0\,,&\qquad &    \text{in $\Omout$;}
\\
\label{eq:a11}
w &=0\,,&\qquad & \text{on $\partial\Om$;}
\\
\label{eq:a12}
w &= h_i+c_i\,, &\qquad &\text{on $\Memb_i$, $i=1,\dots,m$,}
\end{alignat}
satisfies the further condition
\begin{equation}\label{eq:a13}
\int_{\Memb_i}\pder{w}{\nu}\di\sigma=\ell_i\,,\qquad i=1,\dots,m\,,
\end{equation}
where $\ell_i$ are given numbers.
Moreover, the constants $c_i$ are unique.
\end{prop}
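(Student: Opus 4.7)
The plan is to reduce \eqref{eq:a13} to a finite-dimensional linear system whose matrix is a symmetric positive-definite ``capacitance'' matrix, exploiting the linearity of \eqref{eq:a10}--\eqref{eq:a12}. Since $h_i+c_i\in H^{1/2}(\Memb_i)$ and the components $\Memb_i$ are pairwise disjoint smooth surfaces, for every $c=(c_1,\dots,c_m)\in\R^m$ the Dirichlet problem \eqref{eq:a10}--\eqref{eq:a12} is uniquely solvable in $H^1(\Omout)$; call its solution $w_c$. I would introduce the capacitary potentials $v_j\in H^1(\Omout)$, $j=1,\dots,m$, defined as the unique harmonic functions in $\Omout$ with $v_j=0$ on $\partial\Om$ and $v_j|_{\Memb_i}=\delta_{ij}$; by linearity $w_c=w_0+\sum_{j=1}^m c_j v_j$, where $w_0$ is the solution for $c=0$.

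Next I would interpret the total flux \eqref{eq:a13} weakly via Green's identity. Since $v_i$ vanishes on $\partial\Om$ and equals $\chi^{}_{\Memb_i}$ on $\Memb$, while $\nu$ points from $\Memb$ into $\Omout$, testing the equation $\Delta w_c=0$ against $v_i$ gives
\[
\int_{\Memb_i}\pder{w_c}{\nu}\di\sigma=-\int_{\Omout}\nabla w_c\cdot\nabla v_i\di x.
\]
Substituting the decomposition of $w_c$, condition \eqref{eq:a13} becomes the $m\times m$ linear system
\[
\sum_{j=1}^m M_{ij}\,c_j=-\ell_i-\int_{\Omout}\nabla w_0\cdot\nabla v_i\di x,\qquad i=1,\dots,m,
\]
for the manifestly symmetric matrix $M_{ij}:=\int_{\Omout}\nabla v_j\cdot\nabla v_i\di x$. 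Both existence and uniqueness of the constants $c_i$ are then equivalent to the invertibility of $M$.

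The core step is proving that $M$ is positive definite. For $\xi\in\R^m$,
\[
\xi^\top M\xi=\int_{\Omout}\Bigl|\nabla\sum_{j=1}^m \xi_jv_j\Bigr|^2\di x\ge 0,
\]
and if this quadratic form vanishes then $v:=\sum_j \xi_j v_j$ is constant on each connected component of $\Omout$; connectedness of $\Omout$ together with the homogeneous Dirichlet condition on $\partial\Om$ forces $v\equiv 0$, whence $\xi_i=v|_{\Memb_i}=0$ for every $i$. The only technically subtle point I must guard against is the rigorous meaning of \eqref{eq:a13}, since a priori $\partial w/\partial\nu\in H^{-1/2}(\Memb)$ only; this works precisely because the disjointness of the components $\Memb_i$ makes each $\chi^{}_{\Memb_i}$ smooth on $\Memb$ and therefore admits the $H^1(\Omout)$-lift $v_i$ against which to pair, justifying the whole finite-dimensional reduction.
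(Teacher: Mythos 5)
Your proof is correct, and it takes a genuinely different route from the paper's at the crucial step. Both arguments begin the same way: linearity reduces \eqref{eq:a13} to an $m\times m$ linear system with unknown $c=(c_1,\dots,c_m)$, and in both cases the coefficient matrix is built from the fluxes $\int_{\Memb_i}\partial_\nu u_j[1]\di\sigma$ of the capacitary potentials (your $v_j$, the paper's $u_j[1]$). The divergence occurs at the proof of invertibility. The paper argues by contradiction using the strong maximum principle and Hopf's Lemma: a nontrivial null vector $d$ of the matrix yields a harmonic $w$ that is constant on each $\Memb_i$ and has zero total flux through each $\Memb_i$, yet Hopf's Lemma at the component carrying the extremal value of the $d_j$'s forces the normal derivative to have a sign, producing a nonzero flux. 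You instead re-express the flux matrix via Green's identity (pairing $\Delta w_c=0$ against $v_i\in H^1(\Omout)$, which vanishes on $\partial\Om$ and equals $\chi_{\Memb_i}$ on $\Memb$), exhibiting it as minus the Gram matrix $M_{ij}=\int_{\Omout}\nabla v_i\cdot\nabla v_j\di x$, which is symmetric and positive definite because $\Omout$ is connected and $v_j=0$ on $\partial\Om$. Your energy argument is more elementary (it uses only the variational structure and Poincar\'e-type rigidity, not the strong maximum principle or boundary Hopf Lemma), gives the symmetry of the system for free, sidesteps the minor case analysis in the paper's max/min argument, and is more robust — it would transfer verbatim to a general divergence-form operator, or to $H^{1/2}$ boundary data, for which Hopf's Lemma would require extra smoothness. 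The paper's approach is slightly shorter once Hopf's Lemma is invoked, but buys no additional information. You also correctly identify the delicate point — the meaning of the flux integral when $\partial w/\partial\nu$ is only an $H^{-1/2}(\Memb)$ distribution — and resolve it the same way the paper does in its Remark \ref{r:r20}, by pairing with the fixed lift $v_i$.
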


\begin{remark}\label{r:r21}
In Proposition~\ref{t:a3}, the assumption $h_i\in H^1(\Permemb_i)$ may be relaxed to
$h_i\in H^{1/2}(\Permemb_i)$, but in our following application (see Theorem \ref{t:a4}), $h$ shall belong to
$H^1(\Permemb)$, since we will need to consider $\beltramigrad h$.
\end{remark}

\begin{remark}\label{r:r3}
In order to state a periodic version of Proposition \ref{t:a3}, in which $\Om$ is replaced by $Y=(0,1)^N$
and periodic boundary conditions are assigned instead of \eqref{eq:a11},
we need to impose
the natural compatibility condition given by $\sum \ell_i=0$. Notice that,
in this case, uniqueness is ensured up to a global additive constant.
\end{remark}

The regular dependence of the constants $c_{i}$ on the data will play
a role in our proof of the following existence result (see Remark~\ref{r:r4}).

\begin{thm}\label{t:a4}
Let $T>0$, $f\in L^2(\Om_T)$ and $\overline u_0\in  H^1(\Memb)$. Then,
problem \eqref{eq:PDEin}--\eqref{eq:InitData} admits a unique solution $u\in\spaziosoleps$.
Moreover, $\beltramigrad u\in H^1\big(0,T; L^2(\Permemb)\big)$.
\end{thm}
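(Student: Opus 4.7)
Uniqueness follows from the energy inequality \eqref{eq:a30} together with linearity, so I focus entirely on existence, which I would obtain via a Banach fixed point argument for the evolution of the interface values. Since the Laplace--Beltrami operator annihilates constants, the problem has a gauge freedom in the value of $u$ on each $\Memb_{i}$; I resolve it by introducing
\begin{equation*}
W\defeq \Bigl\{h\in H^{1}(\Memb):\int_{\Memb_{i}}h\di\sigma=0,\ i=1,\dots,m\Bigr\},\qquad X\defeq L^{2}(0,T;W),
\end{equation*}
and taking as unknown $v\in X$ the zero-mean-on-each-component representative of $u_{t}|_{\Memb}$.

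Given $v\in X$, I would define $\mathcal{S}(v)\in X$ as follows. Let $\widehat{u}_{0}\in W$ be the zero-mean-on-each-component part of $\overline u_{0}$, and set
\begin{equation*}
h(\cdot,t)\defeq \widehat{u}_{0}+\int_{0}^{t}v(\cdot,\tau)\di\tau\in W,
\end{equation*}
so that in particular $\beltramigrad h=\beltramigrad \overline u_{0}+\int_{0}^{t}\beltramigrad v\di\tau$. For each $t\in(0,T)$, an inhomogeneous extension of Proposition~\ref{t:a3} (covering the Poisson equation $-\Div(\lfout\nabla u)=f$ in $\Omout$) produces unique constants $c_{i}(t)$, $i=1,\dots,m$, such that the solution of
\begin{equation*}
-\Div(\lfbothe\nabla u)=f\text{ in }\Om\setminus\Memb,\quad u=h+c_{i}(t)\text{ on }\Memb_{i},\quad u=0\text{ on }\partial\Om,
\end{equation*}
(with the Dirichlet problems on each $\Omint_{i}$ solved separately) satisfies
\begin{equation*}
\int_{\Memb_{i}}[\lfbothe\nabla u(\cdot,t)\cdot\nu]\di\sigma=0,\qquad i=1,\dots,m,
\end{equation*}
which, in view of \eqref{eq:a66}, are exactly the compatibilities needed to invert $\beltrami$ on each $\Memb_{i}$. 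Solving
\begin{equation*}
-\alpha\beltrami\Psi(\cdot,t)=[\lfbothe\nabla u(\cdot,t)\cdot\nu]\text{ on }\Memb_{i}
\end{equation*}
in the zero-mean class of $H^{1}(\Memb_{i})$, I set $\mathcal{S}(v)(\cdot,t)\defeq \Psi(\cdot,t)$. A fixed point $v^{*}$ furnishes a weak solution in the sense of Definition~\ref{d:weak_sol} via the associated bulk $u$, and the identity $\beltramigrad u=\beltramigrad \overline u_{0}+\int_{0}^{t}\beltramigrad v^{*}\di\tau$ combined with $v^{*}\in X$ yields the additional regularity $\beltramigrad u\in H^{1}(0,T;L^{2}(\Memb))$ claimed in the statement.

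It remains to show that $\mathcal{S}$ is a strict contraction on $L^{2}(0,T^{*};W)$ for $T^{*}$ small enough. Applied to the difference $v_{1}-v_{2}$ the source $f$ drops out, and the standard elliptic estimate in $\Om\setminus\Memb$ combined with elliptic $H^{1}$-theory for $-\beltrami$ on zero-mean functions gives
\begin{equation*}
\|\mathcal{S}(v_{1})-\mathcal{S}(v_{2})\|_{H^{1}(\Memb)}(t)\le \gamma\,\|h_{1}-h_{2}\|_{H^{1}(\Memb)}(t)\le \gamma\int_{0}^{t}\|v_{1}-v_{2}\|_{H^{1}(\Memb)}(\tau)\di\tau.
\end{equation*}
Squaring and integrating in time produces a contraction constant that tends to zero as $T^{*}\to 0$, so Banach's theorem provides a unique fixed point on $[0,T^{*}]$; since $T^{*}$ depends only on the structural constants and not on the data, iteration on consecutive subintervals covers all of $[0,T]$.

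The step I expect to be the main obstacle is verifying that $\mathcal{S}:X\to X$ is well defined with time-measurable and uniformly bounded output: this requires a quantitative, linear dependence of the constants $c_{i}(t)$ furnished by Proposition~\ref{t:a3} on the data $(h,f)$. This is precisely the regular dependence emphasised before Theorem~\ref{t:a4} and in Remark~\ref{r:r4}, and it also explains why Proposition~\ref{t:a3} is formulated with $h_{i}\in H^{1}(\Memb_{i})$ rather than the weaker $H^{1/2}$ version mentioned in Remark~\ref{r:r21}: controlling $|c_{i,1}-c_{i,2}|$ by $\|h_{1}-h_{2}\|_{H^{1}(\Memb)}$ is exactly what lets the last estimate close in the space $X$.
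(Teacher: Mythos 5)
Your argument is correct and is, at its core, the same fixed-point scheme as the paper's: solve the bulk elliptic problem with prescribed interface data, form the current jump, invert the Laplace--Beltrami operator, and iterate, with Proposition~\ref{t:a3} supplying the constants needed to close the compatibility conditions and Remark~\ref{r:r2}/Remark~\ref{r:r4} supplying their Lipschitz dependence on the data. The difference is one of parametrization, and it is a genuine (if modest) simplification. The paper iterates on the trace $h=u|_{\Memb}$ itself, inside a time-\emph{dependent} affine metric space $\Helle=\Helle(t)$ encoding the total-flux constraints; the fixed-point map $L$ then has to be post-composed with the addition of the constants $\widetilde c_i(t)$ from Proposition~\ref{t:a3} precisely to make it land back in $\Helle$, and one must separately check that $\Helle$ is complete. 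You instead iterate on $v=u_t|_{\Memb}$ in the fixed linear Hilbert space $W$ of componentwise mean-zero functions, and impose the flux constraint \emph{inside} the map by adding $c_i(t)$ to the boundary datum of the elliptic step; the $\beltrami$-inversion then naturally returns a mean-zero output, so $\mathcal S:W\to W$ without any correction. This removes the awkward $t$-dependent affine space, at the cost of invoking an inhomogeneous (Poisson-equation) version of Proposition~\ref{t:a3}; that version is indeed available by linearity and is in effect what the paper constructs through the decomposition $u=\overline u+\widetilde u$ and the definition of $\ell_i(t)$ in \eqref{eq:a4n}. The contraction estimate, the $\gamma T^2$ factor, the componentwise-constant freedom in the initial datum (so that only $\beltramigrad u(\cdot,0)=\beltramigrad\overline u_0$ is attained), and the extension to all of $[0,T]$ by iteration on subintervals of length independent of the data, all match the paper. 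The one step you should spell out a bit more if you wrote this up is the verification that the associated bulk $u$ satisfies the integrated-by-parts weak formulation \eqref{eq:weak_sol} (Definition~\ref{d:weak_sol}) rather than just the pointwise-in-$t$ relations \eqref{eq:a1nn}--\eqref{eq:a2nn}, but this is a routine computation that the paper also leaves implicit.
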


Also in this case only the initial gradient is attained by the solution; see also Remark~\ref{r:r8}.

\begin{remark}\label{r:r7}
Notice that $\beltrami u_t$ in \eqref{eq:Circuit} should be
rewritten in the form $\beltramidiv\big((\beltramigrad u)_t\big)$, as it is done in \eqref{eq:a2nn}
(a similar remark applies to \eqref{eq1}), since $u_t$ is not defined as a
Sobolev derivative.
However, the weak formulations \eqref{eq:weak_sol} and \eqref{eq:a14} still are correct.
\end{remark}

\section{Proof of existence for the problem with thick membranes}
\label{s:exist_thick}

We remark that this problem is non-standard since the principal part of the equation is not coercive. For this reason,
we are led to introduce a coercive perturbation of it and to prove the well-posedness of this $\delta$-perturbed problem.
Then, in order to obtain existence for the original problem, we pass
to the limit for $\delta\to 0$, once we have obtained suitable estimates independent of $\delta$.

\begin{thm}\label{t:a1}
Given $\delta>0$ and $f\in L^2(\Om_T)$,
set $\nlfbothed(x)=\alpha$ in $\nOmint$ and
$\nlfbothed(x)=\delta$ in $\nOmoutuno\cup\nOmoutdue$
and let $\ndfbothe $ and $\widetilde u_0$
be defined as above. Then, for any fixed $T>0$, the problem
\begin{alignat}2
\label{eq:a5}
-\Div \big(\nlfbothed\nabla \soluzdeltat+\ndfbothe \nabla\soluzdelta\big) &=f\,,
&\qquad&\text{in $\Om_T$;}
\\
\label{eq:a5bis}
\nabla\soluzdelta(x,0)&=\nabla\widetilde u_0\,,&\qquad&\text{in $\Om$;}
\\
\label{eq:a6bis}
\soluzdelta(x,t)&=0\,,&\qquad &\text{on $\partial\Omega \times(0,T)$,}
\end{alignat}
admits a unique solution $\soluzdelta\in L^2\big(0,T;H^1_0(\Om)\big)\cap H^1(\Om_T)$.
\end{thm}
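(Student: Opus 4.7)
The plan is to approach this as a standard pseudo--parabolic problem, exploiting the fact that, unlike \eqref{eq1}, the coefficient $\nlfbothed$ in \eqref{eq:a5} is uniformly coercive: $\nlfbothed(x)\ge \min(\alpha,\delta)>0$ throughout $\Om$. A preliminary observation is that the initial condition \eqref{eq:a5bis} is prescribed only on the gradient, but, since $\soluzdelta(\cdot,t)$ and $\widetilde u_{0}$ both belong to $H^{1}_{0}(\Om)$ and $\Om$ is connected, the identity $\nabla \soluzdelta(\cdot,0)=\nabla\widetilde u_{0}$ forces $\soluzdelta(\cdot,0)-\widetilde u_{0}$ to be a constant whose trace vanishes, hence $\soluzdelta(\cdot,0)=\widetilde u_{0}$. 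So I can equivalently work with a classical initial datum in $H^{1}_{0}(\Om)$.

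First, I would set up a Faedo--Galerkin scheme. Taking an orthonormal basis $\{w_{k}\}_{k\in\N}$ of $H^{1}_{0}(\Om)$ (for instance, the eigenfunctions of $-\Lapl$ with Dirichlet condition), I seek $u_{n}(x,t)=\sum_{k=1}^{n}c_{k}^{n}(t)w_{k}(x)$ solving, for $j=1,\dots,n$,
\begin{equation*}
\int_{\Om}\nlfbothed \nabla (u_{n})_{t}\cdot\nabla w_{j}\di x+\int_{\Om}\ndfbothe \nabla u_{n}\cdot\nabla w_{j}\di x=\int_{\Om} f w_{j}\di x\,,
\end{equation*}
with $u_{n}(\cdot,0)$ the $H^{1}_{0}$-projection of $\widetilde u_{0}$ onto the span of $\{w_{1},\dots,w_{n}\}$. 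This is a linear ODE system $Mc^{n\prime}(t)+Kc^{n}(t)=F(t)$ whose mass matrix $M_{ij}=\int_{\Om}\nlfbothed \nabla w_{i}\cdot\nabla w_{j}\di x$ is symmetric positive definite thanks to the coercivity of $\nlfbothed$; hence a unique global solution on $[0,T]$ exists.

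Next, I would extract two a priori estimates. Testing by $u_{n}$ (multiplying the $j$-th equation by $c^{n}_{j}(t)$ and summing) produces
\begin{equation*}
\frac{1}{2}\der{}{t}\!\int_{\Om}\nlfbothed|\nabla u_{n}|^{2}\di x+\int_{\Om}\ndfbothe|\nabla u_{n}|^{2}\di x=\int_{\Om} f u_{n}\di x\,,
\end{equation*}
which, together with Poincar\'e's and Gronwall's inequalities, gives a bound in $L^{\infty}\bigl(0,T;H^{1}_{0}(\Om)\bigr)$. Testing instead by $(u_{n})_{t}$ yields
\begin{equation*}
\int_{\Om}\nlfbothed|\nabla (u_{n})_{t}|^{2}\di x+\frac{1}{2}\der{}{t}\!\int_{\Om}\ndfbothe|\nabla u_{n}|^{2}\di x=\int_{\Om} f (u_{n})_{t}\di x\,,
\end{equation*}
and, since $\ndfbothe\ge 0$, absorbing the right-hand side by Poincar\'e and Young yields $\{(u_{n})_{t}\}$ bounded in $L^{2}\bigl(0,T;H^{1}_{0}(\Om)\bigr)$ (with constants depending on $\delta$), hence $\{u_{n}\}$ bounded in $H^{1}(\Om_{T})$. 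Weak compactness then furnishes a subsequence converging to some $\soluzdelta$; passing to the limit in the linear weak formulation is routine, and the initial condition $\soluzdelta(\cdot,0)=\widetilde u_{0}$ follows from $\soluzdelta\in \CC^{0}([0,T];H^{1}_{0}(\Om))$ (a consequence of the $H^{1}(\Om_{T})$ bound and of the $L^{2}(0,T;H^{1}_{0})$ bound on $\soluzdeltat$) together with the convergence $u_{n}(\cdot,0)\to\widetilde u_{0}$.

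Uniqueness is immediate: the difference $w$ of two solutions solves the homogeneous problem with zero initial datum, so testing with $w$ gives $\der{}{t}\!\int_{\Om}\nlfbothed|\nabla w|^{2}\di x\le 0$, whence $\nabla w\equiv 0$ and, by the Dirichlet condition, $w\equiv 0$. I do not expect a genuine obstacle here, because the uniform coercivity provided by $\delta>0$ reduces the problem to a textbook pseudo-parabolic setting; the only subtle point is the gradient-type initial condition, dealt with in the first paragraph. The \emph{actual} difficulty is postponed to the proof of Theorem \ref{t:a2}, where one must pass to the limit $\delta\to 0$: the estimate obtained by testing with $\soluzdeltat$ degenerates, and it will be necessary to rely instead on $\delta$-independent bounds of the type \eqref{energy1}, whose derivation does not use the non-degenerate coercivity exploited here.
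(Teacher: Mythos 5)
Your proof is correct, but it follows a genuinely different route from the paper's. The paper does not use a Faedo--Galerkin scheme: after establishing the energy estimate \eqref{eq:a7bis} (which settles uniqueness), it proves existence by a contraction argument. For a given $h\in L^{2}\bigl(0,T;H^{1}_{0}(\Om)\bigr)$ it solves the elliptic problem $-\Div(\nlfbothed\nabla v^{\delta})=\Div(\ndfbothe\nabla h)+f$ with $v^{\delta}=0$ on $\partial\Om$, sets $\soluzdelta(t)=\widetilde u_{0}+\int_{0}^{t}v^{\delta}$, and shows that $h\mapsto\soluzdelta$ is a contraction on $L^{2}\bigl(0,\overline T;H^{1}_{0}(\Om)\bigr)$ for small $\overline T$, then iterates in time. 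What that buys is that the ansatz $\soluzdelta=\widetilde u_{0}+\int_{0}^{t}v^{\delta}$ matches exactly the fixed-point structure reused in the harder thin-membrane proof (Theorem \ref{t:a4}), so the two existence proofs are formally parallel. Your Galerkin scheme is more self-contained and textbook-like, avoids the time-slicing iteration entirely, and the two a priori estimates (testing with $u_{n}$ and with $(u_{n})_{t}$) deliver both the $L^{\infty}(0,T;H^{1}_{0})$ bound and the $H^{1}\bigl(0,T;H^{1}_{0}\bigr)$ regularity directly, which is at least as strong as what the paper obtains. Your preliminary observation, that on a connected $\Om$ the gradient initial condition \eqref{eq:a5bis} together with the Dirichlet boundary condition pins down $\soluzdelta(\cdot,0)=\widetilde u_{0}$, is correct and resolves the only slight subtlety in treating \eqref{eq:a5bis} as a classical initial datum. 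You are also right that the genuine difficulty is the $\delta\to0$ limit, where the estimate from testing with $(u_{n})_{t}$ degenerates and only $\delta$-uniform bounds of the type \eqref{energy1}/\eqref{eq:a7bis} survive; this is indeed what the paper uses in the proof of Theorem \ref{t:a2}.
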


\begin{proof}
First, we note that the weak formulation of problem \eqref{eq:a5}--\eqref{eq:a6bis} reads as
\begin{equation}\label{eq:a7}
- \!\! \int_{0}^{T}\!\!\int_{\Om}\!\!  \nlfbothed\nabla\soluzdelta \cdot\nabla\phi_t \di x\di t+\!\!
 \int_{0}^{T}\!\!\int_{\Om}\!\!
 \ndfbothe \nabla\soluzdelta\cdot\nabla{\phi}\di x\di t=\!\!
  \int_{0}^{T}\!\!\int_{\Om} \!\! f\phi\di x\di t +\!
  \int_{\Om} \!\nlfbothed\nabla\widetilde u_0\cdot\nabla{\phi}(0)\di x\,,
\end{equation}
for every test function $\phi\in \CC^\infty(\overline{\Om_T})$ such that $\phi$ has compact
support in $\Om$ for every
$t\in(0,T)$ and $\phi(\cdot,T)=0$ in $\Om$. Moreover, as done in \eqref{energy1} and
\eqref{eq:a30}, we have also the energy estimate
\begin{multline}\label{eq:a7bis}
\sup_{t\in(0,T)}\int_{\nOmint}  |\nabla\soluzdelta|^2\di x+
{\delta}\sup_{t\in(0,T)}\int_{\nOmoutuno\cup\nOmoutdue}  |\nabla\soluzdelta|^2\di x
+ \int_{0}^{T}\!\!\int_{\nOmoutuno\cup\nOmoutdue} |\nabla\soluzdelta|^2\di x\di t
\\
\leq  \gamma\left[\int_{0}^{T}\!\!\int_{\Om} f^2\di x\di t
+\int_{\nOmint}  |\nabla\overline u_0|^2\di x+
{\delta}\int_{\nOmoutuno\cup\nOmoutdue}  |\nabla\widetilde u_0|^2\di x\right]
\le \gamma\,,
\end{multline}
where $\gamma$ depends on $\lfint,\lfout,\alpha,
\Vert \overline u_0\Vert_{H^1(\nOmint)}, \Vert f\Vert_{L^2(\Om)}$,
the Poincar\'{e} constant for $\Om$,
but not on  $\delta$. As a consequence of this energy estimate, we obtain that, if a solution does
exist, then it is unique.

In order to prove existence for problem \eqref{eq:a5}--\eqref{eq:a6bis}, for
any given function $h\in L^2\big(0,T;H^1_0(\Om)\big)$, we consider the auxiliary problem
\begin{alignat}2
\label{eq:a8}
-\Div\big(\nlfbothed\nabla v^{\delta}\big)&=\Div(\ndfbothe \nabla h)+f\,,
& \qquad &  \text{in $\Om$;}
\\
\label{eq:a8bis}
 v^{\delta} &=0\,, &\qquad & \text{on $\partial\Om$.}
\end{alignat}
Since $\nlfbothed$ is a strictly positive $L^\infty(\Om)$-function, problem \eqref{eq:a8}--\eqref{eq:a8bis}
is a standard Dirichlet problem, so that, for a.e. $t\in(0,T)$, it admits a unique solution
$ v^{\delta}\in L^2\big(0,T;H^1_0(\Om)\big)$, satisfying for a.e. $t\in (0,T)$
the following energy estimate:
\begin{equation*}
\int_{\nOmint}|\nabla v^{\delta}|^2\di x+
\delta\int_{\nOmoutuno\cup\nOmoutdue}|\nabla v^{\delta}|^2\di x\le
\gamma (\Vert h\Vert_{H^1_0(\Om)}\Vert \nabla v^{\delta}\Vert^{}_{L^2(\Om)}
+\Vert f\Vert_{L^2(\Om)}\Vert v^{\delta}\Vert^{}_{L^2(\Om)})\,,
\end{equation*}
which gives
\begin{equation*}
\Vert v^{\delta}\Vert^{}_{H^1_0(\Om)}\leq
\gamma_\delta(\Vert h\Vert^{}_{H^1_0(\Om)}+\Vert f\Vert^{}_{L^2(\Om)})\,,
\end{equation*}
where $\gamma_\delta$ depends on $\alpha,\lfint,\lfout,\delta$ and the Poincar\'{e}
constant for $\Om$.
Integrating with respect to time the previous inequality, we obtain
\begin{equation}\label{eq:a9}
\Vert v^{\delta}\Vert^{}_{L^2(0,T;H^1_0(\Om))}\leq
\gamma_\delta(\Vert h\Vert^{}_{L^2(0,T;H^1_0(\Om))}+
\Vert f\Vert^{}_{L^2(\Om_T)})\,.
\end{equation}
Hence, we can define the function
\begin{equation}\label{eq:a6}
\soluzdelta(x,t) = \widetilde u_0(x)+\int_0^t v^{\delta}(x,\tau)\di \tau\,,
\end{equation}
which actually belongs to $L^2\big(0,T;H^1_0(\Om)\big)\cap H^1(\Om_T)$.
Now, let us choose $\overline T$ in such a way that $\gamma_\delta\overline T<1$ and consider
the linear operator $L:L^2\big(0,\overline T;H^1_0(\Om)\big)\to
L^2\big(0,\overline T;H^1_0(\Om)\big)\cap H^1(\Om_T)\subset L^2\big(0,\overline T;H^1_0(\Om)\big)$ defined by $L(h)= \soluzdelta$, where $\soluzdelta$ is given by \eqref{eq:a6}. Clearly, $L$ is a
contraction since
\begin{multline*}
\Vert L(h_1)-L(h_2)\Vert^2_{L^2(0,\overline T;H^1_0(\Om))}
=\Vert \soluzdelta_1-\soluzdelta_2\Vert^2_{L^2(0,\overline T;H^1_0(\Om))}
=\Vert \int_0^t(v_1^\delta-v_2^\delta)\di\tau\Vert^2_{L^2(0,\overline T;H^1_0(\Om))}
\\
\leq \int_0^{\overline T}
\left(t\int_0^t\Vert v_1^\delta-v_2^\delta\Vert^2_{H^1_0(\Om)}\di\tau\right)\di t
\leq \frac{\overline T^2}{2}\int_0^{\overline T}
\Vert v_1^\delta-v_2^\delta\Vert^2_{H^1_0(\Om)}\di\tau
\\
\leq \frac{\overline T^2}{2}\Vert v_1^\delta-v_2^\delta\Vert^2_{L^2(0,\overline T;H^1_0(\Om))}
\leq \gamma^2_\delta \frac{\overline T^2}{2}\Vert h_1-h_2\Vert^2_{L^2(0,\overline T;H^1_0(\Om))}
<\frac{1}{2}\Vert h_1-h_2\Vert^2_{L^2(0,\overline T;H^1_0(\Om))}.
\end{multline*}
Therefore, there exists a unique fixed point $\soluzdelta\in L^2(0,\overline T;H^1_0(\Om))$, given by
\eqref{eq:a6}, where $v^\delta$ satisfies
$$
-\Div\big(\nlfbothed\nabla v^{\delta}\big)=\Div(\ndfbothe \nabla \soluzdelta)+f\,,
\qquad  \text{in $\Om\times(0,\overline T)$,}
$$
which is nothing else than the equation \eqref{eq:a5}, since $v^\delta=\soluzdelta_t$.
Finally, since $\overline T$ depends on $\alpha,\lfint,\lfout, \delta$, but not on the initial
condition $\overline u_0$, we can repeat the previous fixed point argument in the intervals
$(\overline T,2\overline T)$, $( 2\overline T,3\overline T)$ and so on, so that we can recover the whole interval $(0,T)$ by iteration.
Here, we employ the regularity in time of $u^\delta$ to define the trace of $u^\delta(t)$ at all time levels.
Then, the thesis is achieved, once we take into account that conditions \eqref{eq:a5bis}, \eqref{eq:a6bis} are clearly satisfied by the function $\soluzdelta$
thus constructed.
\end{proof}

\begin{proof}[Proof of Theorem~\ref{t:a2}]
For any $\delta>0$, let $\soluzdelta\in L^2\big(0,T;H^1_0(\Om)\big)\cap H^1(\Om_T)$ be the
solution of problem \eqref{eq:a5}--\eqref{eq:a6bis}. By the Poincar\'{e} inequality
and integrating the inequality \eqref{eq:a7bis} with respect to the time $t$ in the
interval $(0,T)$, we obtain
$$
\Vert \soluzdelta\Vert^{}_{L^2(0,T;H^1_0(\Om))}\leq\gamma\,,
$$
where $\gamma$ is independent of $\delta$. Then, passing to a subsequence if needed,
it follows that there exists $u\in L^2\big(0,T;H^1_0(\Om)\big)$ such that
$\soluzdelta\wto u$ for $\delta\to 0$ in $L^2\big(0,T;H^1_0(\Om)\big)$. Then, passing to
the limit in the weak formulation \eqref{eq:a7} and taking into account that $\nlfbothed\to\nlfbothe$
strongly in $L^\infty(\Om)$, we obtain that the limit $\soluz$ satisfies \eqref{eq:a14}; i.e.,
$\soluz$ is a solution of problem \eqref{eq1}--\eqref{eq5}. By uniqueness, it follows that
the whole sequence $(\soluzdelta)$ coverges to $\soluz$ and $\soluz$ is the unique solution
of problem \eqref{eq1}--\eqref{eq5}.
\end{proof}

\section{Proof of existence for the problem with thin interfaces}
\label{s:exist_micro}

The main goal of this section is to prove that problem \eqref{eq:PDEin}--\eqref{eq:InitData} admits a solution
$u\in\spaziosoleps$.

\begin{remark}\label{r:r20}
Since the solution of problem \eqref{eq:a10}--\eqref{eq:a12}, in general, belongs only to
$H^1(\Omout)$, we have to specify the meaning of condition \eqref{eq:a13}.
It is quite a standard result (see, for instance, \cite{baiocchi:capelo}), but we prefer to recall
it here for the reader's convenience.

To this purpose, assume that, given $h\in H^1(\Memb)$, $(h_n)\in\CC^\infty(\Memb)$ is a sequence of smooth
functions such that $h_n\to h$ strongly in $H^1(\Memb)$.
Let us set $H^1_\Memb(\Omout):=\{u\in H^1(\Omout): \text{s.t.  $u=0$ on $\partial\Om$}\}$
and, for $n\in\N$, define the linear functional
$A_n:H^1_\Memb(\Omout)\to\R$ as
\begin{equation}\label{eq:a35}
A_n(\phi)=\int_{\Memb}\pder{u_n}{\nu}\phi\di\sigma\,,
\end{equation}
where, recalling that $\Omout$ is smooth, $u_n\in \CC^\infty(\overline{\Omout})$ is the smooth solution of problem \eqref{eq:a10}--\eqref{eq:a12}
corresponding to the boundary datum $h_n\in \CC^{\infty}(\Memb)$ and $c_i$, $i=1,\dots,m$, as above.
Taking into account that $u_n$ is harmonic, we obtain also
\begin{equation*}
A_n(\phi)=-\int_{\Omout} \nabla u_n\cdot\nabla \phi\di x\,.
\end{equation*}
By standard energy estimate, and taking into account the linearity of the problem, we obtain
\begin{equation}\label{eq:a26tris}
\Vert u_n-u_m\Vert_{H^1(\Omout)}\leq \gamma \Vert h_n-h_m\Vert_{H^1(\Memb)}\,,
\qquad \forall n,m\in\N\,,
\end{equation}
so that $u_n\to u\in H^1_\Memb(\Omout)$ strongly in $H^1(\Omout)$,
where $u$ is still the solution of problem \eqref{eq:a10}--\eqref{eq:a12} corresponding to $h$.

Then, by \eqref{eq:a35}, we obtain that there exists a limit functional denoted by $A:H^1_\Memb(\Omout)\to\R$ and defined by
\begin{equation}\label{eq:a42}
A(\phi)=-\int_{\Omout} \nabla u\cdot\nabla \phi\di x\,.
\end{equation}
In particular, by fixing $i\in \{1,\dots,m \}$ and by taking
$\widehat\phi_i\in H^1_\Memb(\Omout)$ such that $\widehat\phi_i\equiv 1$ on $\Memb_i$
and $\widehat\phi_i=0$ on $\Memb_j$, for $j\not=i$,
we have that
\begin{equation}\label{eq:a43}
\int_{\Memb_i}\pder{u_n}{\nu}\di\sigma=-\int_{\Omout}\nabla u_n\cdot\nabla\widehat\phi_i\di x=A_n(\widehat\phi_i)
\to A(\widehat\phi_i)\,,
\end{equation}
so that we can state that, for a given $H^1$ solution of problem \eqref{eq:a10}--\eqref{eq:a12},
condition \eqref{eq:a13} is understood in the sense that $A(\widehat\phi_i)=0$, for $i=1,\dots,m$,
where $A$ is here the operator associated to $w$.
\end{remark}

\begin{proof}[Proof of Proposition~\ref{t:a3}]
For any $h\in H^1(\Memb_j)$, denote by $u_j[h]\in H^1_\Memb(\Omout)$
the solution of the standard elliptic problem
\begin{alignat}2
\label{eq:a14bis}
\Delta v &=0\,,&\qquad &    \text{in $\Omout$;}
\\
\label{eq:a15}
v &=0\,,&\qquad & \text{on $\partial\Om$;}
\\
\label{eq:a16}
v &= 0\,, &\qquad &\text{on $\Memb_i$, $i=1,\dots,m$, $i\not=j$;}
\\
\label{eq:a16bis}
v &= h\,, &\qquad &\text{on $\Memb_j$.}
\end{alignat}
Then, taking into account the linearity of the problem,
the solution to \eqref{eq:a10}--\eqref{eq:a12} can be written in the form
\begin{equation}\label{eq:a17}
w= \sum_{j=1}^m u_j[h_j]+u_j[c_j]\,.
\end{equation}
Then, the conditions \eqref{eq:a13} become
\begin{equation}\label{eq:a18}
\sum_{j=1}^m \left\{\int_{\Memb_i}\pder{u_j[h_j]}{\nu}\di\sigma+
c_j\int_{\Memb_i}\pder{u_j[1]}{\nu}\di\sigma\right\}=\ell_i\,,
\quad i=1,\dots,m\,.
\end{equation}
Upon defining
\begin{equation}\label{eq:a18bis}
a_{ij}=\int_{\Memb_i}\pder{u_j[1]}{\nu}\di\sigma\qquad\text{and}\qquad
G_i=-\sum_{j=1}^m \int_{\Memb_i}\pder{u_j[h_j]}{\nu}\di\sigma+\ell_i\,,
\end{equation}
we can rewrite \eqref{eq:a18} as
\begin{equation}\label{eq:a23}
\sum_{j=1}^{m} a_{ij}c_j=G_i\,,\qquad i=1,\dots,m\,.
\end{equation}
We claim that the previous linear system has a unique solution $(c_1,\dots,c_m)$.
Indeed, assume, by contradiction, that the corresponding homogeneous system
\begin{equation}\label{eq:a21}
\sum_{j=1}^{m} a_{ij}d_j=0\,,\qquad i=1,\dots,m\,,
\end{equation}
admits a nonzero solution.
It is easily seen that the function $w=\sum_j u_j[d_j]$ solves \eqref{eq:a10}--\eqref{eq:a12}
with $h_j=0$ and $c_j=d_j$, for every $j=1,\dots,m$, and it satisfies also the conditions \eqref{eq:a13}
with $\ell_i=0$, $i=1,\dots,m$, since
\begin{equation}\label{eq:a20}
\int_{\Memb_i}\pder{w}{\nu}\di\sigma=\sum_{j=1}^m d_j\int_{\Memb_i}\pder{u_j[1]}{\nu}\di\sigma
=\sum_{j=1}^{m} a_{ij}d_j=0\,,\qquad i=1,\dots,m\,.
\end{equation}
Now, let $k\in\{1,\dots,m\}$ be an index such that
$$
d_k=\max_{j=1,\dots,m}d_j\,
$$
Then, if $d_k\geq 0$,
by Hopf's Lemma it follows that  $\pder{w}{\nu}<0$ on $\Memb_k$, which contradicts
\eqref{eq:a20} for $i=k$. A similar argument holds when $d_k<0$.
Hence, the linear system \eqref{eq:a21} admits only the null solution, which implies that the matrix
${\mathcal A}:=[a_{ij}]$ has a trivial kernel, that is \eqref{eq:a23} has a unique solution.
\end{proof}

\begin{remark}\label{r:r1}
Let $h\in H^1(\Memb)$ be a given function and consider the problem
\begin{alignat}2
\label{eq:a25}
-\Div (\lfbothe\nabla u) & =0\,,&\qquad &\text{in $\Omint\cup\Omout$;}
\\
\label{eq:a25_quater}
u &=0\,, & \qquad &\text{on $\partial\Om$;}
\\
\label{eq:a25_quinque}
u &=h\,, & \qquad &\text{on $\Memb=\bigcup_{i=1}^m \Permemb_i$.}
\end{alignat}
As a consequence of the previous lemma, where we take $\ell_i=0$, for $i=1,\dots,m$,
it follows that the set
\begin{multline}\label{eq:a24}
\Hzero =\{ h\in H^1(\Memb)\ \text{s.t. the solution $u\in H^1_0(\Om)$ to problem \eqref{eq:a25}
--\eqref{eq:a25_quinque}}
\\
\text{satisfies $\displaystyle \int_{\Memb_i} \lfout\pder{u^{\text{out}}}{\nu}\di\sigma=0$,
$\forall i=1,\dots,m$}\}
\end{multline}
is a non-empty linear space. Moreover, it is not difficult to see that $\Hzero$ is also a closed
subspace of $H^1(\Memb)$. Indeed, if $(h_n)$ is a sequence in $\Hzero$ strongly converging
to $h$ in $H^1(\Memb)$, for $n\to +\infty$,
it follows that the corresponding sequence $(u_n)$ of solutions to
problem \eqref{eq:a25}--\eqref{eq:a25_quinque},
with $h$ replaced by $h_n$, strongly converges in $H^1_0(\Om)$ to the solution $u\in H^1_0(\Om)$ of
problem \eqref{eq:a25}--\eqref{eq:a25_quinque} corresponding
to the limit function $h$.
Moreover, passing to the limit, for $n\to +\infty$, in the weak formulation of \eqref{eq:a25}
for $\Omout$, we obtain
$$
0=-\int_{\Memb_i}\lfout\pder{u_n}{\nu}\di\sigma =
\int_{\Omout}\lfout\nabla u_n\cdot\nabla \varphi\di x\to \int_{\Omout}\lfout\nabla u\cdot\nabla \varphi\di x\,,
$$
for every $\varphi\in H^1_0(\Om)$ supported in a neighbourhood of $\Memb_i$ and $\varphi=1$
on $\Memb_i$. This implies that
$$
0=\int_{\Omout}\lfout\nabla u\cdot\nabla \varphi\di x=-\int_{\Memb_i}\lfout\pder{u}{\nu}\di\sigma\,,
$$
and this can be repeated for every $i=1,\dots,m$, so that $h\in\Hzero$.
Hence, $\Hzero $ is closed, which implies
that it is a Banach space.
\end{remark}

\begin{remark}\label{r:r2}
Let $h_j\in H^1(\Memb_j)$. For $i=1,\dots,m$, if we fix $\widehat\phi_i \in H^1_{\Memb}(\Omout)$
such that $\widehat\phi_i=1$ on $\Memb_i$, $\widehat\phi_i=0$ on  $\Memb_j$ for $j\not=i$, by \eqref{eq:a42} and \eqref{eq:a43}
we have
\begin{multline}\label{eq:a36}
\left\vert \sum_{j=1}^m \int_{\Memb_i}\lfout\pder{u_j[h_j]}{\nu}\di\sigma\right|
=
\left\vert \sum_{j=1}^m \int_{\Omout}\lfout\nabla{u_j[h_j]}\cdot\nabla\widehat\phi_i\di x\right|
\\
\leq\gamma \sum_{j=1}^m\Vert u_j[h_j]\Vert^{}_{H^1(\Omout)}\,\Vert\widehat\phi_i\Vert^{}_{H^1(\Omout)}\leq
\gamma\Vert h\Vert^{}_{H^1(\Memb)}\,,
\end{multline}
where we set $u[h]=\sum_j u_j[h_j]$ and the last inequality is due to the standard energy inequality
for problem \eqref{eq:a10}--\eqref{eq:a12} and the fact that the test function $\widehat\phi_i$ is fixed.
Given $h,g\in  H^1(\Memb)$ with $h\big\vert_{\Memb_j}= h_j$, $g\big\vert_{\Memb_j}= g_j$ and
$h_j,g_j\in H^1(\Memb_j)$, set $G^h_i,G^g_i$ the corresponding numbers defined in \eqref{eq:a18bis}.
Then, by the linearity of problem \eqref{eq:a14bis}--\eqref{eq:a16bis} and
by \eqref{eq:a36}, it follows that
\begin{equation}\label{eq:a40}
|G_i^h-G_i^g|\leq \gamma\Vert h-g\Vert^{}_{H^1(\Memb)}\,.
\end{equation}
Hence, by the first equality in \eqref{eq:a18bis} and by \eqref{eq:a23}, we have
\begin{equation}\label{eq:a41}
|c_j^h-c_j^g|\leq \gamma \Vert h-g\Vert^{}_{H^1(\Memb)}\,,
\end{equation}
where we have taken into account that the constant matrix ${\mathcal A}:=[a_{ij}]$ by Proposition \ref{t:a3} is invertible
and depends only on the geometry.
Here, we have employed also for $c_j^h$ and $c_j^g$ the same notation used for $G^h_i$ and $G^g_i$.
\end{remark}

\begin{remark}\label{r:r4}
We note that, if in \eqref{eq:a16bis} we take $h_j=h_j(t)$
with $h_j\in L^2\big(0,T;H^1(\Memb)\big)$, by standard
regularity results for elliptic equations, it follows that the solution $u_j[h_j]$ of problem
\eqref{eq:a14bis}--\eqref{eq:a16bis}
belongs to the space $L^2\big(0,T;H^1_\Memb(\Omout)\big)$. Then, if also $\ell_i\in L^2(0,T)$,
by \eqref{eq:a18bis} it follows that $G_i\in L^2(0,T)$ and,
by \eqref{eq:a23}, the same holds also for $c_j$.
Analogously, if $h_j\in H^1\big(0,T;H^1(\Memb)\big)$ and $\ell_i\in H^1(0,T)$, then $u_j[h_j]\in H^1\big(0,T;H^1_\Memb(\Omout)\big)$ and again
$G_i\in H^1(0,T)$ and the same holds also for $c_j$.
\end{remark}

{\it Proof of Theorem \ref{t:a4}.}
For a.e. $t\in(0,T)$, let
$\overline u(t)\in H^1_0(\Om)$ be the unique solution of the standard
Dirichlet problem
\begin{alignat}2
\label{eq:a1n}
-\Div(\lfboth\nabla \overline u(t))=& f(t)\,,&\quad &\text{in $\Omint\cup\Omout$;}
\\
\label{eq:a2n} \overline u(t) =& 0\,,&\quad &\text{on $\Memb$.}
\end{alignat}
Clearly, $\overline u$ satisfies
\begin{equation}\label{eq:a3n}
\Vert \overline u(t)\Vert^{}_{H^1_0(\Om)}\leq \const\Vert f(t)\Vert^{}_{L^2(\Om)}\,.
\end{equation}
Moreover, set
\begin{equation}\label{eq:a4n}
\ell_j(t)=-\int_{\partial\Om}\lfout\pder{\overline u(t)}{n}\di\sigma
-\int_{\Omint^j\cup\Omout} f(t)\di x+
\sum_{i\not=j}\int_{\Memb_i}\lfout\pder{\overline u(t)}{\nu}\di\sigma\,,
\end{equation}
where $n$ denotes the outer normal vector to $\partial \Om$.

For a.e. $t\in (0,T)$, let us define
\begin{multline}\label{eq:a24_l}
\Helle =\{ h\in H^1(\Memb)\ \text{s.t. the solution $u\in H^1_0(\Om)$ to problem \eqref{eq:a25}
--\eqref{eq:a25_quinque}}
\\
\text{satisfies $\displaystyle \int_{\Memb_i} \lfout\pder{u^{\text{out}}}{\nu}\di\sigma=\ell_i(t)$,
$\forall i=1,\dots,m$}\}
\end{multline}
where $\ell_i(t)$, $i=1,\dots,m$, are defined in \eqref{eq:a4n}.
Following a similar argument as in Remark \ref{r:r1}, one can easily prove
that $\Helle$ endowed with the distance defined by
$$
\delle(h_1,h_2) = \Vert h_1-h_2\Vert_{H^1(\Memb)}\,,\qquad \forall h_1,h_2\in\Helle\,,
$$
is a complete metric space.
Let $h\in L^2(0,T;\Helle)$ and, for a.e. $t\in(0,T)$, let $\widetilde u(t)$
be the solution of problem \eqref{eq:a25}--\eqref{eq:a25_quinque}
with $h$ replaced by $h(t)$.
Clearly, the unique solution $\widetilde u(t)\in H^1_0(\Om)$ satisfies
\begin{equation}\label{eq:a2}
\Vert \widetilde u(t)\Vert^{}_{H^1_0(\Om)}\leq \gamma \Vert h(t)\Vert_{H^1(\Memb)}\,,
\end{equation}
where $\Vert h(t)\Vert^{}_{H^1(\Memb)}$ depends obviously on $\ell_1,\dots ,\ell_m$, and hence on
$\Vert f(t)\Vert^{}_{L^2(\Om)}$.
Set $u(t)=\overline u(t)+\widetilde u(t)$, which satisfies
\begin{alignat}2
\label{eq:a25n}
-\Div (\lfbothe\nabla u) & =f(t)\,,&\qquad &\text{in $\Omint\cup\Omout$;}
\\
\label{eq:a25_quatern}
u &=0\,, & \qquad &\text{on $\partial\Om$;}
\\
\label{eq:a25_quinquen}
u &=h(t)\,, & \qquad &\text{on $\Memb$.}
\end{alignat}
Starting from $u(t)$, solve the problem
\begin{alignat}2
\label{eq:a3}
 - \alpha\beltrami v(t) & =\left[\lfbothe\pder{ u(t)}{\nu}\right]\,,&
 \qquad &\text{in $\Memb$;}
\\
\label{eq:a28}
\int_{\Memb_i} v(t)\di \sigma & =0\,.&\qquad & i=1,\dots,m.
\end{alignat}
First, we note that $\left[\lfbothe\pder{u(t)}{\nu}\right]\in H^{-1}(\Memb)$; indeed,
taking into account that $u(t)\in H^1_0(\Om)$ solves \eqref{eq:a25n} and setting, for $w\in H^1(\Permemb)$,
$$
\langle\left[\lfbothe \pder{ u(t)}{\nu}\right],w\rangle
:=-\int_{\Omout} \lfout\nabla u(t)\cdot\nabla w\di x-
\int_{\Omint}\lfint\nabla u (t)\cdot\nabla w\di x\,,
$$
where $\langle\cdot,\cdot\rangle$ denotes the duality pairing between $H^{-1}(\Memb)$ and $H^1(\Memb)$
and $w$ is assumed to be extended from $H^1(\Memb)$ to an $H^1_0(\Om)$-function such that $\Vert w\Vert_{H^1_0(\Om)}\leq\gamma
\Vert w\Vert_{H^1(\Memb)}$,
it follows that $\left[\lfbothe \pder{ u(t)}{\nu}\right]$ is a linear and continuous functional on $H^1(\Memb)$, since
\begin{equation}\label{eq:a26}
|\langle\left[\lfbothe \pder{ u(t)}{\nu}\right],w\rangle|\leq \gamma \Vert  u(t)\Vert_{H^1_0(\Om)}\Vert w\Vert_{H^1_0(\Om)}
\leq \gamma (\Vert h(t)\Vert_{H^1(\Memb)}+\Vert f(t)\Vert_{L^2(\Om)})\Vert w\Vert_{H^1(\Memb)}\,.
\end{equation}
Moreover, since $ u(t)$ satisfies \eqref{eq:a25n} inside $\Omint$, it follows that
$$
\int_{\Memb_i}\lfint\pder{u^{\text{int}}(t)}{\nu}\di\sigma =-\int_{\Omint^i}f(t)\di x\,,
$$
while
\begin{multline*}
\int_{\Memb_i}\lfout\pder{u^{\text{out}}(t)}{\nu}\di\sigma =
\int_{\Omout} f(t)\di x-\sum_{j\not=i}\int_{\Memb_j}\lfout\pder{\overline u(t)}{\nu}\di\sigma
+\int_{\partial\Om}\lfout\pder{\overline u(t)}{n}\di\sigma
+\ell_i(t) =
\\
\int_{\Omout} f(t)\di x-\sum_{j\not=i}\int_{\Memb_j}\lfout\pder{\overline u(t)}{\nu}\di\sigma
+\int_{\partial\Om}\lfout\pder{\overline u(t)}{n}\di\sigma
\\
-\int_{\partial\Om}\lfout\pder{\overline u(t)}{n}\di\sigma
-\int_{\Omint^i\cup\Omout} f(t)\di x+
\sum_{j\not=i}\int_{\Memb_j}\lfout\pder{\overline u(t)}{\nu}\di\sigma
=-\int_{\Omint^i} f(t)\di x
\end{multline*}
by the choice of $h\in\Helle$.
Therefore, the compatibility condition
$$
\int_{\Memb_i}\left[\lfbothe\pder{u(t)}{\nu}\right]\di\sigma =0
$$
is satisfied, so that, from standard results, for a.e. $t\in(0,T)$,
there exists a unique solution $v(t) \in H^1(\Memb)$ of \eqref{eq:a3} such that
\begin{equation}\label{eq:a4}
\Vert v(t)\Vert_{H^1(\Memb)}\leq
\gamma \Vert\left[\lfbothe\pder{u(t)}{\nu}\right]\Vert^{}_{H^{-1}(\Memb)} \leq
\gamma (\Vert h(t)\Vert^{}_{H^1(\Memb)}+\Vert f(t)\Vert^{}_{L^2(\Om)})\,,
\end{equation}
where the last inequality is due to \eqref{eq:a26}.

Define
\begin{equation}\label{eq:a27}
w(x,t)=\overline u_0(x)+\int_0^t v(x,\tau)\di\tau\in H^1\big((0,T);H^1(\Memb)\big)\,,
\end{equation}
and
\begin{equation}\label{eq:a27bis}
\widetilde w(x,t)= w(x,t) + \sum_{i=1}^m \widetilde c_i(t)\chi_{\Memb_i}(x)\,,
\end{equation}
where, for $i=1,\dots,m$, $\widetilde c_i(t)$ are the constants given in Proposition \ref{t:a3} with $h_i(x,t)=w(x,t)$ and $\ell_i$ given by \eqref{eq:a4n}.
Now, set $\overline T =(2\sqrt\gamma)^{-1}$, where $\gamma$ is the constant given in the last line
of \eqref{eq:a31}, and consider the operator $L:L^2\big(0,\overline T;\Helle\big)\to L^2\big(0,\overline T;\Helle\big)$,
defined by $L(h)=\widetilde w$, with $\widetilde w$ given by \eqref{eq:a27bis}. Clearly,
$L$ is a contraction since
\begin{multline}\label{eq:a31}
\int_0^{\overline T} \left[\delle\big(L(h_1),L(h_2)\big)\right]^2\di t
=\Vert \widetilde w_1-\widetilde w_2\Vert^2_{L^2(0,\overline T;H^1(\Memb))}
\\
=\Vert \int_0^t(v_1-v_2)\di\tau+\sum_{i=1}^m \big(\widetilde c^1_i(t)-\widetilde c^2_i(t)\big)\chi_{\Memb_i}\Vert^2_{L^2(0,\overline T;H^1(\Memb))}
\\
\leq \gamma\left\{\int_0^{\overline T}
\left(t\int_0^t\Vert v_1(\tau)-v_2(\tau)\Vert^2_{H^1(\Memb)}\di\tau\right)\di t
+  \sum_{i=1}^m |\Memb_i|\int_0^{\overline T}\big(\widetilde c^1_i(t)-\widetilde c^2_i(t)\big)^2\di t \right\}
\\
\leq \gamma\left\{\int_0^{\overline T}
\left(t\int_0^t\Vert v_1(\tau)-v_2(\tau)\Vert^2_{H^1(\Memb)}\di\tau\right)\di t+ \sum_{i=1}^m |\Memb_i|\int_0^{\overline T}
\Vert w_1(t)-w_2(t)\Vert^2_{H^1(\Memb_i)}\di t \right\}
\\
\leq \gamma\left\{\int_0^{\overline T}
\left(t\int_0^t\Vert v_1(\tau)-v_2(\tau)\Vert^2_{H^1(\Memb)}\di\tau\right)\di t+ \int_0^{\overline T}\!\!\left(t
\int_0^t\Vert v_1(\tau) -v_2(\tau) \Vert^2_{H^1(\Memb)}\di\tau\!\right)\!\!\!\di t\!\right\}
\\
\leq \gamma\overline T^2\int_0^{\overline T}
\Vert v_1(\tau)-v_2(\tau)\Vert^2_{H^1(\Memb)}\di t
\leq \gamma\overline T^2\Vert h_1-h_2\Vert^2_{L^2(0,\overline T;H^1(\Memb))}
\\
=\frac{1}{2}\Vert h_1-h_2\Vert^2_{L^2(0,\overline T;H^1(\Memb))},
\end{multline}
where we reason as in \eqref{eq:a41} (with $h$ and $g$ replaced by $w_1$ and $w_2$ respectively).
We also use an obvious version of \eqref{eq:a4} written for $f=0$, which readily follows from
the definition of $v$.
Therefore, there exists a unique fixed point $\widetilde w\in L^2\big(0,\overline T;\Helle\big)$.
Consider the function $u$ defined as in \eqref{eq:a25n}--\eqref{eq:a25_quinquen} with $h=\widetilde w$ being
the fixed point. Then, according to our definition,
\begin{equation}\label{eq:a1nn}
\beltramigrad u(x,t)=\beltramigrad \widetilde w(x,t)=\beltramigrad w(x,t)= \beltramigrad\overline u_0(x)
+\int_0^t \beltramigrad v(x,\tau)\di \tau\,,
\end{equation}
where $v$ is the solution of \eqref{eq:a3}--\eqref{eq:a28} for the just defined $u$. Then, clearly,
\begin{equation}\label{eq:a2nn}
-\alpha \beltramidiv(\beltramigrad u)_t=\left[\lfboth\nabla u\cdot \nu\right]\,.
\end{equation}
Finally, $\beltramigrad u(x,0)=\beltramigrad \overline u_0(x)$; then, $u$ solves our problem \eqref{eq:PDEin}--\eqref{eq:InitData}
in $(0,\overline T)$. Note that actually $\beltramigrad u$ is continuous in time (in the $L^2$-norm) either
owing to an analogue of Proposition \ref{r:r6} or simply by \eqref{eq:a1nn} above, so that we may choose
$\beltramigrad u(x,\overline T)$ as a new initial data.
In this fashion we cover the interval $(0,T)$ with a finite number of steps of width $\overline T$, which does not depend on the
initial data.
\hfill$\square$

\begin{coroll}\label{c:c1}
Let $T>0$ and $\overline u_0\in H^1(\Memb)$. If $f\in H^1\big(0,T;L^2(\Om)\big)$,
then the solution $u$ of problem \eqref{eq:PDEin}--\eqref{eq:InitData}
belongs to $H^1\big(0,T;\XX_0(\Om)\big)$.
\end{coroll}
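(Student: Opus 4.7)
The plan is to trace $H^1$-in-time regularity through the fixed-point construction in the proof of Theorem~\ref{t:a4}, using the stronger hypothesis $f\in H^1\bigl(0,T;L^2(\Om)\bigr)$ together with Remark~\ref{r:r4} at the decisive step.

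First, I would observe that by linearity of the Dirichlet problem \eqref{eq:a1n}--\eqref{eq:a2n}, the map $f(t)\mapsto \overline u(t)$ is bounded linear from $L^2(\Om)$ into $H^1_0(\Om)$; hence $\overline u\in H^1\bigl(0,T;H^1_0(\Om)\bigr)$. Each boundary integral appearing in the definition \eqref{eq:a4n} of $\ell_i$ is, via the distributional interpretation recalled in Remark~\ref{r:r20}, a bounded linear functional of $\overline u(t)$ and $f(t)$; thus $\ell_i\in H^1(0,T)$ for every $i=1,\dots,m$.

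Second, let $\widetilde w\in L^2(0,T;\Helle)$ be the fixed point constructed in Theorem~\ref{t:a4}, and let $u$, $v$ be the corresponding functions. The estimate \eqref{eq:a4} immediately yields $v\in L^2\bigl(0,T;H^1(\Memb)\bigr)$, and hence
\begin{equation*}
w(x,t)=\overline u_0(x)+\int_0^t v(x,\tau)\di\tau\in H^1\bigl(0,T;H^1(\Memb)\bigr).
\end{equation*}
Now I would apply Remark~\ref{r:r4} with $h_j=w$ on $\Memb_j$ and with the $\ell_i$ from Step~1, obtaining $c_i\in H^1(0,T)$. Therefore
\begin{equation*}
\widetilde w=w+\sum_{i=1}^{m} c_i(t)\,\chi_{\Memb_i}\in H^1\bigl(0,T;H^1(\Memb)\bigr).
\end{equation*}

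Third, I would exploit the linearity of the elliptic problem \eqref{eq:a25n}--\eqref{eq:a25_quinquen}: since the solution operator $(f(t),\widetilde w(t))\mapsto u(t)$ is bounded linear from $L^2(\Om)\times H^{1}(\Memb)$ into $H^1_0(\Om)$, we get $u\in H^1\bigl(0,T;H^1_0(\Om)\bigr)$ with $\partial_t u$ solving \eqref{eq:a25n}--\eqref{eq:a25_quinquen} with data $(\partial_t f,\partial_t\widetilde w)$ for a.e.\ $t$. Since the trace of $u$ on $\Memb$ equals $\widetilde w\in H^1\bigl(0,T;H^1(\Memb)\bigr)$, combining both facts gives $u\in H^1\bigl(0,T;\XX_0(\Om)\bigr)$.

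The main obstacle is the transition in Step~2: even though $\widetilde w$ a priori is only $L^2$ in time as a fixed point, one must show that the constants $c_i(t)$ produced by the construction share the regularity of the data. This is precisely where Proposition~\ref{t:a3} and Remark~\ref{r:r4} are essential, for they express $c_i$ through the invertible, purely geometric matrix $\mathcal A=[a_{ij}]$ acting on data $(G_i)$ that depend linearly and continuously (in a suitable norm) on $w$ and $\ell_i$, so that time regularity passes through without loss.
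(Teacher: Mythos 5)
Your proposal is correct and follows exactly the same route as the paper's (very terse) proof: the paper states that the corollary is ``a direct consequence of the construction in \eqref{eq:a27bis}, when we take into account that $\ell_j\in H^1(0,T)$, as it follows from \eqref{eq:a4n}, and we recall Remark~\ref{r:r4}.'' You have simply filled in the intermediate steps -- the bootstrap $v\in L^2(0,T;H^1(\Memb))\Rightarrow w\in H^1(0,T;H^1(\Memb))$ from \eqref{eq:a4} and \eqref{eq:a27}, the passage to $\widetilde c_i\in H^1(0,T)$ via Remark~\ref{r:r4} and the invertibility of $\mathcal A$, and the final lift to $u\in H^1(0,T;\XX_0(\Om))$ by linearity of the elliptic solution operator plus the trace identity $u|_{\Memb}=\widetilde w$ -- all of which are implicit in the paper's one-line argument.
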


\begin{proof}
It is a direct consequence of the construction in \eqref{eq:a27bis}, when we take into account that
$\ell_j\in H^1(0,T)$, $j=1,\dots, m$, as it follows from \eqref{eq:a4n}, and we recall Remark \ref{r:r4}.
\end{proof}

\begin{remark}\label{r:r8}
Clearly, in the case $f\in H^1\big(0,T;L^2(\Om)\big)$, the previous corollary implies that $u(x,0)$ is defined a.e. in
$\Om$; however, in general, it does not coincide with $\overline u_0$, but it is
``rearranged" by adding in each connected component of $\Permemb$ the suitable constant provided by Proposition \ref{t:a3},
as explained in Remark \ref{r:r6bis}.
\end{remark}

\begin{remark}\label{r:r5}
We note that, if $\Om=Y=(0,1)^N$ and we replace condition \eqref{eq:BoundData} with the requirement
that the solution is $Y$-periodic with respect to the spatial variable and has null mean average on $Y$
for a.e. $t\in (0,T)$, then the corresponding problem \eqref{eq:PDEin}--\eqref{eq:Circuit},
\eqref{eq:InitData} still admits a unique solution $u\in L^2\big(0,T;H^1_\#(Y)\cap H^1(\Memb)\big)$,
when also the initial datum $\overline u_0$ and the source $f$ are assumed to be $Y$-periodic and
$f$ has null mean average on $Y$.
\end{remark}

\end{document}